\numberwithin{equation}{section}
\newtheorem{theorem}{Theorem}[section]
\newtheorem{lemma}[theorem]{Lemma}
\newtheorem{proposition}[theorem]{Proposition}
\newtheorem{definition}[theorem]{Definition}
\newtheorem{remark}[theorem]{Remark}
\newtheorem{corollary}[theorem]{Corollary}
\def\ON{\operatorname}
\def\sp{{\rm Supp}}
\def\c{{\rm cap}}
\def\d{{\rm dist}}
\author[Sunghyun Hong]{}
\subjclass[2010]{Primary  35Q53}
\keywords{Zakharov system, Hamiltonian system, symplectic capacity, nonsqueezing property}
\email{shhong7523@gmail.com}
\begin{document}
\title[Symplectic capacity for the Zakharov system]{Infinite dimensional symplectic capacity and nonsqueezing property for the Zakharov system on the $1$-dimensional torus}
\maketitle

\centerline{\scshape Sunghyun Hong\footnote{Email address: shhong7523@gmail.com}}
\medskip
\medskip

\begin{abstract}
We prove the invariant of the symplectic capacity for the Zakharov system on a torus. If the Zakharov solution map is well-defined, then it can be regarded as a symplectomorphism. Thus, we first show the global well-posedness via the local well-posedness and the conservation law. The invariant of the symplectic capacity can be obtained using an approximation method. Many authors   use an approximation method to obtain the nonsqueezing theorem, instead of an invariant of the symplectic capacity. However, the conditions of the Hamiltonian system introduced by Kuksin can be relaxed by a new modified infinite dimensional Hamiltonian system. Thus we can back to the symplectic capacity which contains the nonsqueezing property. Heuristically, we obtain the invariant by using the Hamiltonian system which has linear flow at high frequencies and nonlinear flow at low frequencies. 
\end{abstract}

\tableofcontents

\section{Introduction}
In this paper, we consider the Zakharov system
\begin{equation}\label{eq:Zakharov}
\left\{\begin{array}{ll}
i \partial_t u + \alpha \partial_x^2 u =un, \hspace{1em} &\left(t,x\right) \in \mathbb{R} \times \mathbb{T}, \\
\beta^{-2} \partial_t^2 n - \partial_x^2 n = \partial_x^2 \left(\left|u\right|^2\right), &\left(t,x\right) \in \mathbb{R} \times \mathbb{T}, \\
\left.\left(u,n, \partial_t n\right)\right|_{t=0} = \left(u_0(x),n_0(x), n_1(x)\right) \in L_x^2 \times H_x^{-1/2} \times H_x^{-3/2}, & x \in \mathbb{T}\left(:=\mathbb{R}/{2\pi \mathbb{Z}}\right),
\end{array}
\right.
\end{equation}
where $\alpha$ and $\beta$ are positive real constants, respectively. The functions $u$ and $n$ are complex valued and real valued, respectively. The Zakharov system (\ref{eq:Zakharov}) enjoys conservation laws,
\begin{equation}\label{eq:mass conservation u}
M\left[u\right]\left(t\right) = \int_{\mathbb{T}} \left|u\left(t\right)\right|^2 dx  =  \int_{\mathbb{T}} \left|u\left(0\right)\right|^2 dx = M\left[u_0\right],
\end{equation}
and
\begin{equation}\label{eq:Hamiltonian}
\begin{aligned}
&H\left[u,n,\partial_t n\right]\left(t\right) := H\left[u,n, \dot n\right]\left(t\right)\\
&= \int_{\mathbb{T}} \left(\alpha\left|\partial_x u\left(t\right)\right|^2 + \frac{\left|n(t)\right|^2}{2}  + \frac{\beta^2\left|i \partial_x^{-1} \dot n\right|^2}{2}+ n\left(t\right)\left|u\left(t\right)\right|^2\right) dx = H\left[u_0,n_0,n_1\right].
\end{aligned}
\end{equation}
The first (\ref{eq:mass conservation u}) is called the mass conservation law and the second (\ref{eq:Hamiltonian}) is called the Hamiltonian. They are important tools for showing global well-posedness and to define the symplectic capacity, respectively. 
From \eqref{eq:Zakharov}, we have
\begin{equation*}
\frac{d^2}{dt^2}\int_{\mathbb{T}} n\left(t,x\right) dx  =  \beta^2 \int_{\mathbb{T}} \partial^2_x \left(n+\left|u\right|^2\right) dx =0,
\end{equation*}
and so
\begin{equation*}
\int_{\mathbb{T}} n\left(t,x\right) dx =c_1t +c_0,
\end{equation*}
where
\begin{equation*}
c_0 = \int_{\mathbb{T}} n_0\left(x\right) dx~ \text{and}~ c_1  =  \int_{\mathbb{T}} n_1\left(x\right) dx.
\end{equation*}
Hence, we denote
\begin{equation}\label{eq: Gal trans}
u'\left(t,x\right) = \exp\left[i\left(\frac{1}{4\pi} c_1 t^2 + c_0 t\right)\right] u\left(t,x\right) ~ \text{and}~ n'\left(t,x\right) = n\left(t,x\right) - \frac{1}{2\pi}\left(c_1 t + c_0\right),
\end{equation}
then $u'$ and $n'$ are also the solutions to (\ref{eq:Zakharov}), and
\begin{equation*}
\int n'\left(t,x\right)dx =0 ~ \text{and} ~\int \partial_t n'\left(t,x\right)dx =0.
\end{equation*}
If initial data $n_0\left(x\right)$, $n_1\left(x\right)$ have general mean, then one can easily change the data into the mean zero data by \eqref{eq: Gal trans}. Therefore, it will be convenient to work in the case when initial data $n_0\left(x\right)$, $n_1\left(x\right)$ have mean zero. 

The system (\ref{eq:Zakharov}) was introduced by Zakharov \cite{Zakharov:1972tz}. It represents the propagation of Langmuir turbulence waves in unmagnetized ionized plasma \cite{Zakharov:1972tz}. In the system, $u\left(t,x\right)$ expresses the slowly varying envelope of the electric field and $n\left(t,x\right)$ describes the deviation in ion density from its mean. The constant $\alpha$ is a dispersion coefficient and the constant $\beta$ is the speed of an ion acoustic wave in plasma. 

There are many results for the symplectic capacity and the nonsqueezing theorem for the infinite dimensional Hamiltonian system.  The symplectic capacity was introduced by Ekeland and Hofer \cite{Ekeland:1989dh,Ekeland:1990is} for $\mathbb{R}^{2n}$, and by Hofer and Zehnder \cite{Hofer:1990ul,Hofer:2011vo} for $2n$-dimensional general symplectic manifolds. It was developed from the Darboux width, which was discovered by Gromov \cite{Gromov:1985ww}. Kuksin \cite{Kuksin:1995ue} was the first contributor of the infinite dimensional symplectic capacity for Hamiltonian Partial Differential Equations(PDEs).
Kuksin's concept, of course, is based on the finite dimensional symplectic capacity which was developed by Hofer and Zehnder. Indeed, Kuksin proved an invariance in the symplectic capacity for particular Hamiltonian flow, and so he also captured its nonsqueezing property. Furthermore, he introduced an abstract method in which the Hamiltonian flow on the appropriate function space can be regarded as a symplectic map. Although there are results which have applied this condition \cite{Kuksin:1995ue,Roumegoux:2010sn}, Kuksin's condition for solution flow is somewhat strong. Thus, many contributors to this issue have turned to the nonsqueezing theorem for specific equations. 
\\

To prove the nonsqueezing results for Haimtonian PDEs, one of the main steps is to find a `good' truncation. Besides, the given Hamiltonian system turns out to be well-behaved with `good' frequency truncations. There are two techniques for the truncation, the methods of \cite{Bourgain:1994tr} and \cite{Colliander:2005vv}. In \cite{Bourgain:1994tr}, Bourgain proved the nonsqueezing theorem of the cubic nonlinear Schr\"odinger equation (NLS) in its phase space $L^2_x\left(\mathbb{T}\right)$ space. A sharp frequency truncation and the $X^{s,b}$ space were used to approximate the original solution. Later, this argument was extended by Colliander et al. \cite{Colliander:2005vv} for the KdV equation in its phase space $H^{-1/2}_x\left(\mathbb{T}\right)$. The argument in \cite{Colliander:2005vv} is more complex than the one in \cite{Bourgain:1994tr}. They used a smooth truncation, and also used the Miura transform which changes the KdV flow to a mKdV flow. Indeed, they showed an approximation using truncated mKdV flow and used Miura transform and its inverse. In this way, they obtained the estimate for the KdV flow. We use the methods of Bourgain \cite{Bourgain:1994tr} instead of the method of Colliander et al. \cite{Colliander:2005vv}, because the modulation effects from the non-resonant interaction of (\ref{eq:Zakharov}) is better than that of the KdV equation. In Section \ref{sec:estimates}, we show the bilinear estimates produced by these modulation effects and a similar calculation in \cite{Colliander:2008cq,Takaoka:1999uw}. Specifically, bilinear estimates are needed to approximate the truncated solution flow, and this is stronger than the estimates of \cite{Takaoka:1999uw} to prove the local well-posedness.
Hong and Kwak \cite{Hong:2016fn} extended the result to the higher-order KdV equation, and Mendelson \cite{Mendelson:2014vh} also showed the nonsqueezing of the Klein-Gordon flow on $\mathbb{T}^3$ via a probabilistic approach. Moreover, Kwak \cite{Kwak:2017wb} proved the nonsqueezing and the local well-posedness for the fourth-order cubic nonlinear Schr\"odinger equation on a torus. Recently, Killip et al. \cite{Killip:2016vd,Killip:2016wj} proved the nonsqueezing theorem of the cubic NLS equation on a real line and a plane, respectively. These results are the first nonsqueezing study for an unbounded domain. 

Nevertheless, we want to go back to the `capacity' beyond `nonsqueezing.' There are some results that are independent of the nonsqueezing theorem. For example,  Abbondandolo and Majer \cite{Abbondandolo:2015cb} constructed the symplectic capacity on a \emph{convex set} in the Hilbert space without the approximation approach. However, we focus on the relaxation of Kuksin's condition. As a result, we obtain a symplectic capacity for the Zakharov system flow which does not satisfy Kuksin's condition. In particular, there is no nonsqueezing result associated with the Zakharov flow. Moreover, we do not even know the global well-posedness for the symplectic Hilbert space $L_x^2\left(\mathbb{T}\right) \times H_x^{-\frac{1}{2}}\left(\mathbb{T}\right) \times H_x^{-\frac{3}{2}}\left(\mathbb{T}\right)$. We use the appropriate frequency truncation and approximate the finite dimensional solution to the original infinite dimensional solution, preserving the symplectic form. These are nontrivial facts, because the nonlinear terms in the Zakharov system does not satisfy Kuksin's results. To overcome these obstacles, we need to prove that the frequency truncated solution flow well-approximates to the original solution flow. In addition, the truncated flow should be a Hamiltonian flow. We now introduce the main result.

\begin{theorem}\label{thm: main thm}
Assume that $\frac{\beta}{\alpha}$ is not an integer. Let $Z\left(T\right)$ be the Zakharov flow map at time $T$. For any bounded domain $\mathcal{O}$ in $L_x^2 \times H_x^{-\frac{1}{2}} \times H^{-\frac{3}{2}}_x$, we have
\begin{equation*}
\c\left(\mathcal{O}\right) = \c\left(Z\left(T\right)\left(\mathcal{O}\right)\right)
\end{equation*}
where $\c\left(\cdot\right)$ is the infinite dimensional symplectic capacity. 
\end{theorem}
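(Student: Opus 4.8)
The plan is to realize $Z(T)$ as a symplectomorphism of the phase space and to reduce the invariance of $\c$ to the finite-dimensional Hofer--Zehnder theory through a frequency-truncated Hamiltonian flow. First I would rewrite \eqref{eq:Zakharov} as an abstract Hamiltonian system $\dot w = J\,\nabla H(w)$ on $\mathcal H = L^2_x \times H^{-1/2}_x \times H^{-3/2}_x$, with $H$ given by \eqref{eq:Hamiltonian} and the constant-coefficient symplectic form $\omega$ determined by the quadratic part of \eqref{eq:Hamiltonian}, which pairs $\operatorname{Re} u$ with $\operatorname{Im} u$ and the wave variable $n$ with $\partial_x^{-2}\dot n$. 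Because $\omega$ is the restriction of a constant symplectic structure and $\int_{\mathbb T} n\,|u|^2\,dx$ is the only non-quadratic term, the flow automatically preserves $\omega$ wherever it is defined; this is what lets us regard $Z(T)$ as a symplectic map. Next I would address global well-posedness: using the bilinear $X^{s,b}$ estimates of Section \ref{sec:estimates} I would establish local well-posedness in $\mathcal H$ and then combine the local theory with the conserved quantities \eqref{eq:mass conservation u}--\eqref{eq:Hamiltonian} to preclude finite-time blowup, so that $Z(T)$ is a well-defined continuous symplectomorphism on each bounded domain.

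The heart of the argument is the construction of a \emph{modified} Hamiltonian $H_N$, obtained by inserting sharp projections $P_{\le N}$ into the coupling term $\int n\,|u|^2\,dx$, so that the associated flow $\Phi_N(T)$ acts by the free Schr\"odinger and wave groups on $P_{>N}\mathcal H$ and by a genuinely nonlinear finite-dimensional Hamiltonian flow on $\PH$. Since $\Phi_N(T)$ is a Hamiltonian flow it is symplectic; on the high modes it is a linear isometry, and on the low modes the finite-dimensional Hofer--Zehnder capacity is preserved under the Hamiltonian flow. Because the infinite-dimensional capacity $\c$ is by construction computed through exactly these finite-dimensional truncations, assembling the two pieces shows that each $\Phi_N(T)$ preserves $\c$. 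Heuristically this is the point of the system with linear flow at high frequencies and nonlinear flow at low frequencies: it relaxes Kuksin's condition while keeping the capacity invariant.

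The main obstacle is the approximation step: I must prove that $\Phi_N(T)w \to Z(T)w$ in $\mathcal H$ as $N\to\infty$, uniformly for $w$ in the bounded domain $\mathcal O$. This is exactly where the hypothesis $\beta/\alpha\notin\mathbb Z$ enters, guaranteeing that the Schr\"odinger--wave interaction is non-resonant so that the modulation gain in the bilinear estimates is strong enough to control both the error $P_{\le N}(\text{nonlinearity})-(\text{truncated nonlinearity})$ and the discarded high-frequency nonlinear interactions. The scheme is to establish this difference estimate on a short time interval whose length is independent of $N$, and then iterate it up to the fixed time $T$.

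Finally, I would pass to the limit. Since $\c$ is monotone and continuous with respect to the convergence of domains induced by the uniform convergence $\Phi_N(T)\to Z(T)$ on $\overline{\mathcal O}$, the identities $\c(\mathcal O)=\c\big(\Phi_N(T)\,\mathcal O\big)$ survive in the limit and yield $\c(\mathcal O)=\c\big(Z(T)\,\mathcal O\big)$. I expect the delicate points to be (i) showing that the truncated nonlinear flow on $\PH$ stays in a fixed ball of $\mathcal H$ on $[0,T]$ uniformly in $N$, which is needed to keep the finite-dimensional capacities comparable and is subtle precisely because the conserved quantities do not directly control the low-regularity phase-space norm, and (ii) the uniform-in-$N$ error estimate, for which the bilinear estimates together with the non-integrality of $\beta/\alpha$ are indispensable.
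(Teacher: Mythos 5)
Your setup---the Hamiltonian formulation on $\left(\mathcal H,\omega\right)$, global well-posedness, the modified system with $P_{\le N}$ inserted into the coupling term so that high modes evolve by the free groups, the uniform-in-$N$ short-time approximation estimate iterated to time $T$, and the role of $\beta/\alpha\notin\mathbb Z$ in the non-resonant bilinear estimates---matches the paper closely. The genuine gap is in your final step. You assert that $\c$ is ``continuous with respect to the convergence of domains induced by the uniform convergence $\Phi_N(T)\to Z(T)$'' and that the identities $\c\left(\mathcal O\right)=\c\left(\Phi_N(T)\mathcal O\right)$ therefore survive in the limit. Kuksin's capacity, defined through $m$-admissible fast functions, carries no such continuity: the axioms give only monotonicity, and monotonicity applied to inclusions like $\Phi_N(T)\mathcal O\subset \left(Z(T)\mathcal O\right)_\varepsilon$ would still require outer regularity, $\c\left(\left(Z(T)\mathcal O\right)_\varepsilon\right)\to\c\left(Z(T)\mathcal O\right)$ as $\varepsilon\to 0$, which is neither an axiom of Definition \ref{defn: symplectic capacity} nor evident for Definition \ref{defn: Infn symplectic capacity}. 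As stated, the limiting step does not close.

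The paper avoids taking any limit of capacities. It fixes a single $m$-admissible function $f$ on $\Omega_1=Z(t_1)\left(\Omega_0\right)$ with $m>\c\left(\Omega_0\right)$, observes that $\varepsilon_1=\d_{\Omega_1}\left(f\right)>0$ by admissibility, and only then chooses $N$ so large that the approximation error of Proposition \ref{prop:trunc of flow} is below $\varepsilon_1/2$. Writing $Z(t_1)=\left(I+Z_{\varepsilon_1}(t_1)\right)\circ Z^{N}(t_1)$, the truncated flow preserves the class of admissible functions exactly (finite-dimensional symplectomorphism on low modes, isometry on high modes), while the perturbation $I+Z_{\varepsilon_1}(t_1)$ displaces points by less than half the boundary distance; hence $f$ transplants to an $m$-admissible function $h$ on $Z^{N}(t_1)\left(\widetilde\Omega_0\right)$ with $\sp h=\sp f$. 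Since $m$ exceeds the capacity of that domain, $h$ is fast, its fast trajectories lie in $\sp f$ by Lemma \ref{lem: support of fast traj.}, and so $f$ itself is fast. This function-by-function choice of $N$, depending on $\d\left(f\right)$, is the idea your proposal is missing; with it, your concern (i) is absorbed into the a priori bound \eqref{eq:small} coming from global well-posedness. A smaller inaccuracy: at the regularity $\mathcal H=L^2_x\times H^{-1/2}_x\times H^{-3/2}_x$ the Hamiltonian \eqref{eq:Hamiltonian} is not even finite, so global well-posedness rests on the mass conservation \eqref{eq:mass conservation u} alone (exploiting that the wave nonlinearity depends only on $u$), not on the pair \eqref{eq:mass conservation u}--\eqref{eq:Hamiltonian}.
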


For the solution map to exist as the symplectic map for any $T>0$, we should have the global well-posedness in the phase space as follows.

\begin{theorem}\label{thm:GWP}
Assume that $\frac{\beta}{\alpha}$ is not an integer. The initial value problem \eqref{eq:Zakharov} is globally well-posed for any $\left(u_0, n_0, n_1\right) \in L_x^2\left(\mathbb{T}\right) \times H_x^{-\frac{1}{2}}\left(\mathbb{T}\right) \times H^{-\frac{3}{2}}_x\left(\mathbb{T}\right)$.
\end{theorem}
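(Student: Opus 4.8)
The plan is to upgrade a local theory in Bourgain spaces to a global one by means of the conservation laws, the only genuinely usable one at this regularity being the mass \eqref{eq:mass conservation u}. First I would diagonalize the wave part: for mean-zero data, setting $w=n+i\beta^{-1}|\partial_x|^{-1}\partial_t n$ (so that $n=\operatorname{Re}w$ and $w\in H_x^{-1/2}$ whenever $(n,\partial_t n)\in H_x^{-1/2}\times H_x^{-3/2}$) turns the second equation of \eqref{eq:Zakharov} into the half-wave equation $i\partial_t w-\beta|\partial_x|w=\beta|\partial_x|(|u|^2)$. This recasts \eqref{eq:Zakharov} as a coupled Schr\"odinger--half-wave system on $L_x^2\times H_x^{-1/2}$ with dispersion relations $\tau=-\alpha k^2$ and $\tau=\pm\beta|k|$; the hypothesis $\beta/\alpha\notin\mathbb{Z}$ is exactly what keeps these two characteristic varieties apart and supplies the modulation gain in the bilinear estimates of Section~\ref{sec:estimates}.

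Next I would prove local well-posedness by a contraction-mapping argument in the Bourgain spaces adapted to the two symbols. The two interactions to control are $un$, feeding the Schr\"odinger equation, and $|\partial_x|(|u|^2)$, feeding the wave equation; the bilinear estimates of Section~\ref{sec:estimates} close the fixed point and produce a solution on a time interval $[0,\delta]$ with $\delta=\delta(\|u_0\|_{L^2},\|(n_0,n_1)\|_{H^{-1/2}\times H^{-3/2}})>0$, together with the a priori control $\|u\|_{X^{0,b}([0,\delta])}\lesssim\|u_0\|_{L^2}$ once $\delta$ is chosen small relative to the wave norm. Continuous dependence follows from the same multilinear estimates applied to differences of solutions.

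For globalization the central point is that, at regularity $L_x^2\times H_x^{-1/2}\times H_x^{-3/2}$, the Hamiltonian \eqref{eq:Hamiltonian} controls $u$ only in $H_x^1$, strictly above the phase space, and so cannot be invoked; this is the crux of the difficulty. The mass \eqref{eq:mass conservation u} gives $\|u(t)\|_{L^2}=\|u_0\|_{L^2}$ for as long as the solution persists, but no conserved quantity directly bounds the wave component. The structural feature that rescues the argument is that the wave equation is linear in $n$ with a source built only from $u$. Estimating the Duhamel term for $w$ on a subinterval by the half-wave bilinear estimate gives, schematically,
\[
\Bigl\|\int_0^t e^{-i(t-s)\beta|\partial_x|}|\partial_x|\bigl(|u(s)|^2\bigr)\,ds\Bigr\|_{H^{-1/2}}\lesssim \|u\|_{X^{0,b}([0,t])}^2\lesssim \|u_0\|_{L^2}^2,
\]
so that the increment of the wave norm on each step is governed solely by the conserved mass. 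Summing over steps then yields an a priori bound on $\|(n(t),\partial_t n(t))\|_{H^{-1/2}\times H^{-3/2}}$ that grows at most polynomially in $t$.

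Finally I would iterate the local theory. Since on any $[0,T]$ the full data norm stays bounded by the a priori estimate, the local lifespan admits a positive lower bound uniform over $[0,T]$, so finitely many steps cover $[0,T]$ and extend the solution, with uniqueness and continuous dependence carried along the iteration. I expect the main obstacle to be precisely this globalization: lacking a coercive energy at the phase-space regularity, the control of the wave part must be squeezed entirely out of the mass conservation of $u$ through the bilinear estimates, and one must check that the per-step growth of the wave norm---which carries a small positive power of the step length from the time localization of the Bourgain norm---is summable against the lower bound on the lifespan, so that the total existence time diverges. Concretely this reduces to a quantitative relation between the gain in the bilinear wave estimate and the rate at which $\delta$ degrades as the wave norm grows.
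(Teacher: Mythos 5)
Your proposal follows essentially the same route as the paper: local well-posedness via contraction in the Bourgain spaces using the bilinear estimates of Section~\ref{sec:estimates}, then globalization by exploiting that the wave nonlinearity depends only on $u$, so the per-step increment of the wave norm is controlled by the conserved mass \eqref{eq:mass conservation u}, and the number of steps before the wave norm doubles yields a total time $\sim \left\|u_0\right\|_{L^2_x}^{-2}$ independent of the wave data. The half-wave diagonalization is only a cosmetic repackaging of the paper's $X^{s,b}_W$ norm, and your final summability concern is exactly the point the paper settles with the estimate $\widetilde{T} = mT \lesssim \left\|u_0\right\|^{-2}_{L^2_x}$ (though the resulting a priori bound on the wave norm is exponential rather than polynomial in $t$, which is still sufficient).
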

Theorem \ref{thm:GWP} can be proved by combining the local well-posedness with the mass conservation of $u$ (\ref{eq:mass conservation u}). The details are in Section \ref{sec:GWP}. It is the Duhamel's formula for (\ref{eq:Zakharov}) which can be written as follows,
\begin{align}
u\left(t\right) &:= S\left(t\right)\left(u_0,n_0,n_1\right) = U\left(t\right) u_0 - i \int_0^t U\left(t-s\right) \left[un\right] \left(s\right) ds, \label{eq:Duhamel_Schrodinger}\\
n\left(t\right) &:=W\left(t\right)\left(u_0,n_0,n_1\right)= \partial_t V\left(t\right) n_0 + V\left(t\right) n_1 + \beta^2 \int_0^t V\left(t-s\right) \partial_x^2 \left[\left|u\right|^2\right]\left(s\right) ds, \label{eq:Duhamel_wave}
\end{align}
where $U\left(t\right) = e^{i\alpha t \partial^2_x}$ and $V\left(t\right) = \frac{\sin \left(\beta t \left(- \partial_x^2\right)^{1/2}\right)}{\beta \left(- \partial_x^2\right)^{1/2}} = \frac{\sin \left(\beta t \sqrt{- \Delta}\right)}{\beta \sqrt{- \Delta}}$. 
We denote the solution to \eqref{eq:Zakharov} by
\begin{equation*}
\begin{aligned}
{\bf{z}}\left(t,x\right) &= \left(u\left(t,x\right),n\left(t,x\right), \partial_t n\left(t,x\right)\right) \\
&:=Z\left(t\right)\left(u_0(x),n_0(x),n_1(x)\right) \\
&= S\left(t\right)\left(u_0,n_0,n_1\right) \times W\left(t\right)\left(u_0,n_0,n_1\right) \times \partial_t W\left(t\right)\left(u_0,n_0,n_1\right).
\end{aligned}
\end{equation*}
Thus, we also have $Z\left(t\right)$ as the solution flow to (\ref{eq:Zakharov}). In the same way as here, we will use bold fonts to present vectors in the appropriate space.  The spatial Sobolev space is given by
\begin{equation*}\label{eq:H^s space}
\left\|u\right\|_{H_x^s} = \left\|\left<k\right>^s \widehat{u} \right\|_{\ell^2_k} := \frac{1}{\left(2\pi\right)^{1/2}} \left(\sum_{k \in \mathbb{Z}} \left<k\right>^{2s}\left|\widehat{u} \right|^2\right)^{1/2}
\end{equation*}
for $s \in \mathbb{R}$, where $\left<k\right> = \left(1+\left|k\right|^2\right)^{1/2}$. Let $\mathcal{H}$ be the symplectic Hilbert space $L_x^2\left(\mathbb{T}\right) \times H_x^{-\frac{1}{2}}\left(\mathbb{T}\right) \times H^{-\frac{3}{2}}_x\left(\mathbb{T}\right)$, and
\begin{equation*}
\left\|\left(u,v,w\right)\right\|_{\mathcal{H}} = \left\|u\right\|_{L^2_x} + \left\|v\right\|_{H^{-1/2}_x} + \left\|w\right\|_{H^{-3/2}_x}.
\end{equation*}
We also define the absolute value in $\mathcal{H}$ by
\begin{equation*}
\left|\left(\widehat{u}_{k_0},\widehat{v}_{k_0},\widehat{w}_{k_0}\right)\right|_{\mathcal{H}} = \left|\widehat{u} _{k_0}\right| + \left|{k_0}\right|^{-\frac{1}{2}}\left|\widehat{v}_{k_0}\right| + \left|{k_0}\right|^{-\frac{3}{2}}\left|\widehat{w}_{k_0}\right|
\end{equation*}
for fixed frequency component $k_0$. 

From Theorem \ref{thm: main thm}, we can consider the nonsqueezing theorem of the Zakharov system as well. We first define a ball and a cylinder in the function space $\mathcal{H}$.
\begin{definition}
Let $B^{\infty}_R\left({\bf{v}}_*\right)$ be an infinite dimensional ball in ${\mathcal{H}}$ which has the radius $R$ and  is centered at ${\bf{v}}_* \in {\mathcal{H}}$. That is,
\begin{equation*}
B^{\infty}_R\left({\bf{v}}_*\right) := \left\{{\bf{v}} \in {\mathcal{H}} : \left\|{\bf{v}}-{\bf{v}}_*\right\|_{\mathcal{H}} \leq R\right\}.
\end{equation*}
For any $k \in \mathbb{Z}\setminus \left\{0\right\}\left(:=\mathbb{Z}^*\right)$, $C^{\infty}_{k,r}\left(\eta \right)$ is defined an infinite dimensional $k$-th cylinder in ${\mathcal{H}}$ which has the radius $r$ and is centered at $\eta  \in \mathbb{C}^3$. That is,
\begin{equation*}
C^{\infty}_{k,r}\left(\eta \right) := \left\{{\bf{v}} \in {\mathcal{H}} : \left|\widehat{{\bf{v}}} _k-\eta \right|_{\mathcal{H}} \leq r\right\}
\end{equation*}
where $\widehat{\bf{v}}_k = \left(\widehat{u}_k,\widehat{v}_k,\widehat{w}_k\right) \in \mathbb{C}^3$.
\end{definition}
The nonsqueezing property of the Zakharov system is as follows,
\begin{corollary}\label{cor:main nonsq}
Let $ 0 < r< R$, $\left(u^*, n^*,n^{**}\right) \in L_x^2 \times H_x^{-\frac{1}{2}} \times H^{-\frac{3}{2}}_x $, $k \in \mathbb{Z}^*$, $\left(z,w_0,w_1\right) \in \mathbb{C}^3$ and $T>0$. Then
\begin{equation*}
Z \left(T\right) \left( B^{\infty}_{R} \left(u^*,n^*,n^{**}\right)\right) \not \subseteq C^{\infty}_{k,r}\left(z,w_0,w_1\right).
\end{equation*}
In other words, there is a solution $Z\left(T\right)\left(u_0,n_0,n_1\right) \in C_tL_x^2 \times C_tH_x^{-\frac{1}{2}}\times C_tH_x^{-\frac{3}{2}}$ to \eqref{eq:Zakharov} and $ k_0 \in \mathbb{Z^*} $ such that
\begin{equation*}
\left\|\left(u_0,n_0,n_1\right)-\left(u^*,n^*,n^{**}\right)\right\|_{\mathcal{H}} \leq R,
\end{equation*}
and
\begin{equation*}
\left|\mathcal{F}_x\left[\left(Z\left(T\right)\left(u_0,n_0,n_1\right)\right)\right]\left(k_0\right)-\left(z,w_0,w_1\right)\right|_{\mathcal{H}} > r
\end{equation*}
where $\mathcal{F}_x\left[\cdot\right]$ is the spatial Fourier transform.
\end{corollary}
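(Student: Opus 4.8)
The plan is to obtain Corollary \ref{cor:main nonsq} from Theorem \ref{thm: main thm} together with the two structural properties that any symplectic capacity enjoys: monotonicity under inclusion ($A\subseteq B \Rightarrow \c(A)\le \c(B)$) and the normalization assigning capacity $\pi R^2$ to a ball of radius $R$ and $\pi r^2$ to a symplectic cylinder of radius $r$. This is exactly the mechanism by which Gromov's nonsqueezing theorem follows from the invariance of the symplectic width in finite dimensions. The hypothesis that $\frac{\beta}{\alpha}$ is not an integer enters only through Theorem \ref{thm: main thm}, which guarantees that $Z(T)$ is defined on all of $\mathcal H$ (Theorem \ref{thm:GWP}) and preserves $\c$ on bounded domains.

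I would argue by contradiction, assuming $Z(T)\left(B^\infty_R(u^*,n^*,n^{**})\right)\subseteq C^\infty_{k,r}(z,w_0,w_1)$. The first point is that this cylinder sits inside a genuine symplectic cylinder over a single conjugate pair. Indeed, by the definition of $|\cdot|_{\mathcal H}$ on the $k$-th mode, the inequality $|\widehat{\mathbf v}_k - \eta|_{\mathcal H}\le r$ forces in particular $|\widehat u_k - z|\le r$, so that $C^\infty_{k,r}(\eta)\subseteq D:=\{\mathbf v\in\mathcal H : |\widehat u_k - z|\le r\}$. The set $D$ constrains only the pair $(\operatorname{Re}\widehat u_k,\operatorname{Im}\widehat u_k)$ associated with the Schr\"odinger field $u$ — a symplectic $2$-plane for the canonical structure of $\mathcal H$ — and leaves every remaining coordinate free; hence $D$ is a symplectic cylinder of radius $r$ and $\c(D)=\pi r^2$.

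I would then chain the capacity estimates. Monotonicity together with the inclusions above gives $\c\left(Z(T)(B^\infty_R)\right)\le \c(D)=\pi r^2$, while the normalization on balls gives $\c(B^\infty_R)=\pi R^2$. Theorem \ref{thm: main thm} supplies the equality $\c\left(Z(T)(B^\infty_R)\right)=\c(B^\infty_R)$, and combining these yields $\pi R^2\le \pi r^2$, i.e. $R\le r$, contradicting $0<r<R$. Consequently the assumed inclusion is impossible, which is the claimed non-containment. The equivalent reformulation is just the set-theoretic negation: there exists $(u_0,n_0,n_1)\in B^\infty_R(u^*,n^*,n^{**})$ — globally well-posed by Theorem \ref{thm:GWP}, so that its flow lies in $C_tL_x^2\times C_tH_x^{-1/2}\times C_tH_x^{-3/2}$ — whose image under $Z(T)$ has $k$-th Fourier mode outside the $r$-ball, and one may take $k_0=k$.

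Since the corollary is essentially formal once Theorem \ref{thm: main thm} is in hand, the only point demanding care is the bookkeeping of the normalization constants so that the ball and the cylinder are measured on the same scale. One must confirm that the symplectic form induced on the $\widehat u_k$-plane, together with the Fourier normalizations built into $\|\cdot\|_{\mathcal H}$ and $|\cdot|_{\mathcal H}$, is arranged so that $\c(B^\infty_R)=\pi R^2$ and $\c(D)=\pi r^2$ with the \emph{same} proportionality constant; any common rescaling of the two constants would leave the strict inequality $r<R$ intact, but a genuine discrepancy between them would have to be ruled out. With the capacity framework developed for Theorem \ref{thm: main thm}, these normalizations are already available, so the argument closes.
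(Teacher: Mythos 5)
Your proposal is correct and follows exactly the route the paper intends: the paper never writes out a proof of Corollary \ref{cor:main nonsq}, but explicitly remarks that it follows from monotonicity (property ii), the ball/cylinder normalization (property v), and the capacity invariance of Theorem \ref{thm: main thm}, which is precisely your contradiction argument $\pi R^2=\c(B^\infty_R)=\c(Z(T)(B^\infty_R))\le \c(C^\infty_{k,r})=\pi r^2$. Your detour through the enlarged cylinder $D$ is harmless but unnecessary, since property v) already assigns capacity $\pi r^2$ to $C^\infty_{k,r}$ as defined in the paper.
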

\begin{remark}
There are no smallness conditions imposed on $\left(u^*, n^*, n^{**}\right)$, $\left(z,w_0,w_1\right)$, ${R}$ or ${T}$ in Corollary \ref{cor:main nonsq}.
\end{remark}
Corollary \ref{cor:main nonsq}, the symplectic nonsqueezing theorem, tells us the Zakharov flow cannot squash a large ball into a narrow cylinder, despite the fact that the cylinder has infinite volume.
\section{Notations and Function spaces}
In this section, we introduce notations to discuss our argument. The spatial Fourier transform, the space-time Fourier transform and the inverse Fourier transform are defined as follows,
\begin{equation*}
\begin{aligned}
&\mathcal{F}\left(u\right) = \tilde u\left(k,\tau\right)  = \iint_{\mathbb{R} \times \mathbb{T}} e^{-ikx}e^{-i\tau t} u\left(t,x\right) dxdt, \\
&\mathcal{F}\left(u,v,w\right) = \left(\tilde u , \tilde v, \tilde w\right),\\
&\mathcal{F}_x \left(u\right) =\widehat{u} _k =  \int _{\mathbb{T}} e^{-ikx}u\left(x\right) dx, \\
&\mathcal{F}_x\left(u,v,w\right) =  \left(\widehat{u}_k , \widehat{v}_k , \widehat{w}_k \right),\\
&u \left(x \right) =  \int  e^{ikx}\widehat{u} \left(k\right) dk:=\frac{1}{2 \pi}\sum_{k \in \mathbb{Z}} \widehat{u} _k e^{ikx}.
\end{aligned}
\end{equation*}
We also introduce function spaces. First of all, we use the $X^{s,b}$ spaces (the Bourgain spaces) which are defined by the following norms,
\begin{align*}
\left\|f\right\|_{X_S^{s,b}} &= \left\|\left<k\right>^s \left<\tau - \alpha k^2 \right>^b \tilde f\left(k, \tau\right)\right\|_{l^2_k L^2_{\tau}}, \\
\left\|f\right\|_{X_W^{s,b}} &= \left\|\left<k\right>^s \left<\left|\tau\right| - \beta \left|k\right| \right>^b \tilde f\left(k, \tau\right)\right\|_{l^2_k L^2_{\tau}}.
\end{align*}
Note that the first and the second are associated with Schr\"odinger flow and wave flow, respectively. Using the $X^{s,b}$ spaces, we define $Y_S^s$, $Z_S^s$, $Y_W^s$ and $Z_W^s$ spaces for the solution and the nonlinear terms,
\begin{align*}
\left\|f\right\|_{Y_S^{s}} &=\left\|f\right\|_{X^{s,1/2}_S}+ \left\|\left<k\right>^s  \tilde f\left(k, \tau\right)\right\|_{l^2_k L^1_{\tau}}, \\
\left\|f\right\|_{Z_S^{s}} &=\left\|f\right\|_{X^{s,-1/2}_S}+ \left\|\frac{\left<k\right>^s}{\left<\tau - \alpha k^2\right>}  \tilde f\left(k, \tau\right)\right\|_{l^2_k L^1_{\tau}}, \\
\left\|f\right\|_{Y_W^{s}} &= \left\|f\right\|_{X^{s,1/2}_W}+\left\|\left<k\right>^s  \tilde f\left(k, \tau\right)\right\|_{l^2_k L^1_{\tau}}, \\
\left\|f\right\|_{Z_W^{s}} &= \left\|f\right\|_{X^{s,-1/2}_W}+\left\|\frac{\left<k\right>^s}{\left<\left|\tau\right| - \beta \left|k\right|\right>}  \tilde f\left(k, \tau\right)\right\|_{l^2_k L^1_{\tau}}.
\end{align*}
we give some embeddings for the $Y$ and $Z$ spaces
\begin{align}
Y^s_{S,W} \subseteq &C_t H^s_x \subseteq L^{\infty}_t H^s_x, \label{eq:Y embedding} \\
L^2_t H^s &\subseteq Z^s_{S,W}. \notag
\end{align}
in a compact time interval $\left[0,T\right]$ by the H\"older inequality.
To simplify notations, $\mathcal{Y}$ spaces would be
\begin{equation*}
\left\|\left(u,v\right)\right\|_{\mathcal{Y}} = \left\|u\right\|_{Y^{0}_S} + \left\|v\right\|_{Y_W^{-\frac{1}{2}}}.
\end{equation*}

For each dyadic number $N$, we denote the Littlewood-Paley projection by
\begin{equation}\label{eq:Fourier multiplier}
\begin{split}
&\widehat{P_Nu}\left(k\right) := 1_{N \leq \left|k\right| <2N}\left(k\right) \widehat{u}_k, \\
&\widehat{P_{\leq N} u}\left(k \right) := 1_{\left|k\right| \leq N}\left(k\right) \widehat{u}_k, \\
&\widehat{P_{\geq N} u}\left(k \right) := 1_{\left|k\right| \geq N}\left(k\right) \widehat{u}_k,
\end{split}
\end{equation}
where $1_{\Omega}$ is a characteristic function on $\Omega$. 

We call a connected open set a domain. For an nonempty domain $\mathcal{O} \subset \mathcal{H}$ and $n \geq 1$, we denote
\begin{equation*}
\begin{aligned}
\mathcal{O}_N &= \mathcal{O} \cap P_{\leq N} \mathcal{H} \left(= P_{\leq N} L_x^2 \times P_{\leq N} H_x^{-\frac{1}{2}} \times P_{\leq N} H^{-\frac{3}{2}}_x\right),\\
\mathcal{O}^N &= \mathcal{O} \cap \left(1-P_{\leq N}\right) \mathcal{H},
\end{aligned}
\end{equation*}
and observe that
\begin{equation*}
\partial \mathcal{O}_N \subset \partial \mathcal{O} \cap P_{\leq N} \mathcal{H}.
\end{equation*}

For $x, y \in \mathbb{R}_+$, $x \lesssim y$ denotes $x \le Cy$ for some $C >0$ and $x \sim y$ means $x \lesssim y$ and $y \lesssim x$. Using this, we denote $f = O(g)$ by $f \lesssim g$ for positive real-valued functions $f$ and $g$. Moreover, $x \ll y$ denotes $x \le cy$ for small positive constant $c$. Let $a_1,a_2,a_3 \in \mathbb{R}$ and the quantities $a_{max} \ge a_{med} \ge a_{min}$ can be defined to be the maximum, median and minimum values of $a_1,a_2,a_3$, respectively.

\section{Symplectic capacity for Hilbert space}
We begin with the definition of the symplectic Hilbert space $\mathcal{H}$. Let $\omega$ be a symplectic form $\mathcal{H}$ as follows,
\begin{equation}\label{eq:symplectic form}
\omega\left(\left(u,v,w\right),\left(\acute u, \acute v, \acute w\right)\right) = \omega_i\left(u,\acute u\right) + \omega_{-1/2}\left(v,\acute v\right) + \omega_{-3/2}\left(w,\acute w\right)
\end{equation}
where Hello there,
When is its update?
Is it LIve? why dic
puaed? don't have
mayu

\begin{equation*}\label{eq:symplectic form for eqns}
\omega_i \left(f,g\right) =  \ON{Im} \int \overline{f}  {g}  dx, \quad \omega_{-1/2} \left(f,g\right) = \int f \partial_x^{-1} g dx, ~ \text{and} ~ \omega_{-3/2} \left(f,g\right) = \int f \partial_x^{-3} g dx.
\end{equation*}
Let $J$ be an almost complex structure on $\mathcal{H}$ which is compatible with the Hilbert space inner product $<\cdot, \cdot>$. In other words, a bounded self adjoint operator with $J^2 = -I$ such that $\omega\left({\bf u},{\bf v}\right) = <{\bf u}, J{\bf v}>$ for all ${\bf u}, {\bf v} \in \mathcal{H}$.
One easily checks that the Zakharov system can be written in the form
\begin{equation}\label{eq: Sym Hamil eq}
\partial_t {\bf u}\left(t\right) :=\dot {\bf u}\left(t\right) = J \nabla H\left[{\bf u}\left(t\right)\right]
\end{equation}
where ${\bf u} \in \mathcal{H}$. The notation $\nabla$ in \eqref{eq: Sym Hamil eq} denotes the usual gradient with respect to the Hilbert space inner product. 
Hence, we have
\begin{equation}\label{eq: grad analogy}
\begin{aligned}
\left<
 \begin{bmatrix}
            v_1\\
           v_2 \\
           v_3
 \end{bmatrix} 
,
 \begin{bmatrix}
            \nabla H\left[u\left(t\right), \cdot, \cdot\right]\\
           \nabla  H\left[\cdot, n\left(t\right), \cdot\right] \\
           \nabla  H\left[\cdot, \cdot, \dot n \left(t\right)\right]
 \end{bmatrix} 
\right>
& \equiv
\left<
 \begin{bmatrix}
           1 \\
           1 \\
          1
 \end{bmatrix} 
,
 \begin{bmatrix}
           d H\left[u\left(t\right), \cdot, \cdot\right] \left(v_1, \cdot, \cdot\right) \\
           d H\left[\cdot, n\left(t\right), \cdot\right] \left(\cdot,v_2,\cdot\right) \\
          d H\left[\cdot, \cdot, \dot n\left(t\right)\right] \left(\cdot,\cdot,v_3\right) 
 \end{bmatrix} 
 \right>.
 \\
 & \equiv
\left<
 \begin{bmatrix}
           1 \\
           1 \\
          1
 \end{bmatrix} 
,
  \begin{bmatrix}
           \nabla_{ \varepsilon_1} H \left[{ u }+{ \varepsilon_1 v}, \cdot, \cdot\right] \\
           \cdot, \nabla_{ \varepsilon_2} H \left[{ n }+{ \varepsilon_2 v}, \cdot\right] \\
           \cdot, \cdot, \nabla_{ \varepsilon_3} H \left[{\dot n }+{ \varepsilon_3 v}\right]
 \end{bmatrix} 
 \right>.
\end{aligned}
\end{equation}
\begin{definition}
Consider a pair $\left(\mathcal{H}, \omega\right)$, where $\omega$ is a symplectic form \eqref{eq:symplectic form} on the Hilbert space $\mathcal{H}\left(= L_x^2\left(\mathbb{T}\right) \times H_x^{-\frac{1}{2}}\left(\mathbb{T}\right) \times H^{-\frac{3}{2}}_x\left(\mathbb{T}\right)\right)$. We say that the pair $\left(\mathcal{H}, \omega\right)$ is the symplectic phase space for the Zakharov system. 
\end{definition}
One easily check that an equivalent way to write the Zakharov system corresponding to the Hamiltonian $\ref{eq:Hamiltonian}$ in $\left(\mathcal{H}, \omega\right)$ is
  \begin{align}
\partial_t   
 \begin{bmatrix}
            u \\
           n \\
           \dot n
 \end{bmatrix} 
=
 \begin{bmatrix}
            \nabla_{\omega_i}  H\left[u\left(t\right), \cdot, \cdot\right]\\
           \nabla_{\omega_{-1/2}}  H\left[\cdot, n\left(t\right), \cdot\right] \\
           \nabla_{\omega_{-3/2}}  H\left[\cdot, \cdot, \dot n \left(t\right)\right]
 \end{bmatrix} 
  \end{align}
where the symplectic gradient is defined an analogy with \eqref{eq: grad analogy},
\begin{equation*}
\begin{aligned}
\omega 
\left(
 \begin{bmatrix}
            v_1\\
           v_2 \\
           v_3
 \end{bmatrix} 
,
 \begin{bmatrix}
            \nabla_{\omega_i}  H\left[u\left(t\right), \cdot, \cdot\right]\\
           \nabla_{\omega_{-1/2}}  H\left[\cdot, n\left(t\right), \cdot\right] \\
           \nabla_{\omega_{-3/2}}  H\left[\cdot, \cdot, \dot n \left(t\right)\right]
 \end{bmatrix} 
\right)
=
\left<
 \begin{bmatrix}
           1 \\
           1 \\
          1
 \end{bmatrix} 
,
 \begin{bmatrix}
           d H\left[u\left(t\right), \cdot, \cdot\right] \left(v_1, \cdot, \cdot\right) \\
           d H\left[\cdot, n\left(t\right), \cdot\right] \left(\cdot,v_2,\cdot\right) \\
          d H\left[\cdot, \cdot, \dot n\left(t\right)\right] \left(\cdot,\cdot,v_3\right) 
 \end{bmatrix} 
 \right>.
\end{aligned}
\end{equation*}
Therefore, we can consider $L_x^2\left(\mathbb{T}\right) \times H_x^{-\frac{1}{2}}\left(\mathbb{T}\right) \times H^{-\frac{3}{2}}_x\left(\mathbb{T}\right)$ as the phase space. In the phase space, we can consider an invariance of the symplectic capacity by the Zakharov solution flow.

In the following, we  introduce the infinite dimensional symplectic capacity which was introduced by Kuksin \cite{Kuksin:1995ue}. The symplectic capacity was first discovered by Ekeland and Hofer \cite{Ekeland:1989dh,Ekeland:1990is} in $\mathbb{R}^{2n}$ and was developed by Hofer and Zehnder \cite{Hofer:2011vo}. Specifically, it is a symplectic invariant and the proof of existence is based on the variational principle.
\begin{definition}[Symplectic capacity]\label{defn: symplectic capacity}
A symplectic capacity on the phase space $\left(\mathcal{H},\omega\right)$ with respect to \ref{eq:Zakharov} is a function $\c\left( \cdot \right)$ defined open subset $\mathcal{O} \subset \mathcal{H}$ which takes values in $\left[0, \infty\right]$ and has the following properties:
\begin{enumerate}[{\rm i)}]
\item Translational invariant:
\begin{equation*}
\c\left(\mathcal{O}\right) = \c\left(\mathcal{O} + \xi\right) \quad \text{for} ~ \xi \in \mathcal{H}.
\end{equation*}
\item Monotonicity: Let $\mathcal{O}_1$ and $\mathcal{O}_2$ be open sets in $\mathcal{H}$.
\begin{equation*}
\c\left(\mathcal{O}_1\right) \leq \c\left(\mathcal{O}_2\right) \quad \text{if} ~ \mathcal{O}_1 \subseteq \mathcal{O}_2.
\end{equation*}
\item 2-homogeneity: For $\tau \in \mathbb{R}$,
\begin{equation*}
\c\left(\tau \mathcal{O}\right) = \tau^2 \c \left(\mathcal{O}\right).
\end{equation*}
\item Nontriviality: For bounded nonempty set $\mathcal{O}$,
\begin{equation*}
0 < \c \left(\mathcal{O}\right) < \infty.
\end{equation*}
\item For an $r$-ball $B^{\infty}_r$ in $\mathcal{H}$ and a $k$-th cylinder $C^{\infty}_{k,r}$ which has radius $r$ in $\mathcal{H}$, 
\begin{equation*}
\c\left(B^{\infty}_r\right) = \c\left(C^{\infty}_{k,r}\right) = \pi r^2.
\end{equation*}
\end{enumerate}
\end{definition}
We point out two notable remarks. Combining ii), v) and the invariant of the symplectic capacity, we can get the nonsqueezing theorem for a Hamiltonian flow. Moreover, we note that the symplectic capacity does not determine a unique capacity function. We, thus, have many ways to construct capacity functions, but we follow \cite{Kuksin:1995ue}. To construct a capacity function, we first introduce few definitions.
\begin{definition}[Admissible function]
Let $\mathcal{O}$ be a simply connected open set in the phase space $\mathcal{H}$ (we call it domain in the sequel). Assume that a function $f$ is a smooth function in $\mathcal{O}$, and $m>0$. The function $f$ is called $m$-admissible if
\begin{enumerate}[{\rm i)}]
\item $0 \leq f \leq m$ in $\mathcal{O}$,
\item $f \equiv 0$ in a nonempty subdomain of $\mathcal{O}$, 
\item $f \big|_{\partial \mathcal{O}} \equiv m$,
\item The set $\left\{f<m\right\}$ is bounded, and the distance from this set to $\partial \mathcal{O}$ is strictly positive, i.e., $\d_{\mathcal{O}}\left(f\right) (:= \d\left(\left\{f<m\right\}, \partial \mathcal{O}\right)) > 0$.
\end{enumerate}
\end{definition}
For each $m$-admissible function, we denote
\begin{equation*}
\sp f  = \left\{u: 0 < f\left(u\right) < m\right\},
\end{equation*}
and so we have
\begin{align}
\d\left(f^{-1}\left(0\right), \partial \mathcal{O} \right) \geq \d\left(f\right), \label{eq: boundary est. 1}\\
\d\left(\sp f, \partial \mathcal{O}\right) \geq \d\left(f\right). \label{eq: boundary est. 2}
\end{align}
Let $N \geq 1$ be integer. By using the Fourier basis and restricting the form $\omega$, we see that the truncated phase space $\left(P_{\leq N} \mathcal{H},  \omega\right)$ is a $3\times 2N $-dimensional real symplectic space, and so is symplectomorphic to the standard phase space $\left(\mathbb{R}^{2N \times 3}, \omega_0\right)$ by Darboux theorem.
\begin{definition}[Fast function]
Let $f_N = f \big|_{\mathcal{O}_N}$ be a frequency truncated function. We consider the corresponding symplectic Hamiltonian vector field $V_{f_N}$ in $\mathcal{O}_N$. In other words, for $u, v \in \mathcal{O}_N$,
\begin{equation*}
\omega\left({\bf v}, V_{f_N}\left({\bf u}\right)\right) = df_N\left({\bf u}\right)\left({\bf v}\right) .
\end{equation*}
We call a periodic trajectory of $V_{f_N}$ a `fast trajectory' if it does not pass through a stationary point and the period $T \leq 1$. Furthermore, a $m$-admissible function $f$ is called `fast' if there exists $N_0 = N_0\left(f\right)$ such that for all $N \geq N_0$ the vector field $V_{f_N}$ has a fast trajectory.
\end{definition}
A fast $m$-admissible function has a notable property which comes from its definition, 
\begin{lemma}[Kuksin \cite{Kuksin:1995ue}]\label{lem: support of fast traj.}
All fast periodic trajectories in $V_{f_N}$ are contained in $\sp f \cap P_{\leq N} \mathcal{H}$
\end{lemma}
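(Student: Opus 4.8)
The plan is to exploit the conservation of $f_N$ along the flow of its symplectic gradient $V_{f_N}$, and then to observe that the two extreme level sets $\{f_N = 0\}$ and $\{f_N = m\}$ consist \emph{entirely} of stationary points; a trajectory that avoids stationary points is thereby forced into the intermediate range $\{0 < f_N < m\}$, which is exactly $\sp f \cap \PH$.

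First I would note that, by its defining relation $\omega(\mathbf{v}, V_{f_N}(\mathbf{u})) = df_N(\mathbf{u})(\mathbf{v})$, the field $V_{f_N}$ is a smooth vector field on the finite-dimensional symplectic manifold $(\mathcal{O}_N, \omega)$. Consequently every integral curve of $V_{f_N}$, in particular every fast periodic trajectory $\gamma$, stays inside $\mathcal{O}_N \subset \PH$; this already yields the $\PH$ half of the containment. Next I would verify that $f_N$ is constant along $\gamma$. Writing $\dot\gamma = V_{f_N}(\gamma)$ and using the defining relation together with the antisymmetry of $\omega$,
\begin{equation*}
\frac{d}{dt} f_N\left(\gamma(t)\right) = df_N(\gamma)(\dot\gamma) = \omega\left(V_{f_N}(\gamma), V_{f_N}(\gamma)\right) = 0 .
\end{equation*}
Hence $\gamma$ lies on a single level set $\{f_N = c\}$ for some constant $c \in [0,m]$, since $0 \le f_N \le m$ by admissibility.

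It then remains to rule out $c = 0$ and $c = m$. The crux is that $0$ and $m$ are the global minimum and maximum values of $f_N$: if $f_N(p) = 0$ then, because $f_N \ge 0$, the point $p$ is a global minimizer of $f_N$ on the open set $\mathcal{O}_N$, hence an interior critical point with $df_N(p) = 0$, i.e.\ a stationary point of $V_{f_N}$; symmetrically, if $f_N(p) = m$ then $p$ is a global maximizer and again $df_N(p) = 0$. (Here I use that the boundary carries $f \equiv m \neq 0$, so every point with $f_N = 0$ is interior, and that $\gamma$ itself lies in the open domain $\mathcal{O}_N$, so all of its points are interior.) Thus both level sets $\{f_N = 0\}$ and $\{f_N = m\}$ are composed entirely of stationary points. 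Since a fast trajectory does not pass through any stationary point, we must have $c \ne 0$ and $c \ne m$, i.e.\ $0 < c < m$. Therefore $\gamma \subset \{u \in \mathcal{O}_N : 0 < f(u) < m\} = \sp f \cap \PH$, as claimed.

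The argument is essentially formal, and I do not expect a genuine analytic obstacle; the one substantive point — and the step I would emphasize — is that the boundary values $0$ and $m$ of an $m$-admissible function are its global extrema, which is precisely what makes the extreme level sets stationary. I would also remark that the period bound $T \le 1$ from the definition of a fast trajectory plays no role here: the conclusion holds for \emph{every} periodic trajectory of $V_{f_N}$ that avoids stationary points, the bound on the period being used only later, in the capacity estimates.
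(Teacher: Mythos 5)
Your proposal is correct and follows essentially the same route as the paper, which only sketches the argument in one sentence: the derivative of $f$ vanishes on the complement of $\sp f$ (because $0$ and $m$ are the global extrema of an $m$-admissible function), so by nondegeneracy of $\omega$ those points are stationary for $V_{f_N}$ and a fast trajectory cannot meet them. Your intermediate step using conservation of $f_N$ along the flow is a harmless elaboration of that same idea.
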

This lemma can be proved by the definition of the fast trajectory and the fact that  the derivative of $f$ is zero in the complementary set of $\sp f$. We next introduce a definition of the (infinite dimensional) symplectic capacity.
\begin{definition}[Infinite dimensional symplectic capacity, Kuksin \cite{Kuksin:1995ue}]\label{defn: Infn symplectic capacity}
For a nonempty domain $\mathcal{O} \in \mathcal{H}$, its symplectic capacity $\c\left(\mathcal{O}\right)$ equals
\begin{equation*}
\inf \left\{m_* : \text{\rm each $m$-admissible function with $m>m_*$ is fast}\right\}
\end{equation*}
\end{definition}
From \cite{Kuksin:1995ue}, we already have that the symplectic capacity $\c\left(\cdot\right)$ satisfies Definition \ref{defn: symplectic capacity}. Note that the capacity $\c\left(\mathcal{O}\right)$ depends on the consecutive subsets $P_1 \mathcal{H} \subset P_2 \mathcal{H} \subset \cdots$ of the space $\mathcal{H}$.

\section{Basic estimates}\label{sec:estimates}
In this section, we show estimates to prove Theorem Theorem \ref{thm: main thm} and \ref{thm:GWP}.  First of all, we recall the lemma regarding the linear estimates. 
\begin{lemma}\label{lem:linear est.} Let $\Psi\left(t\right) \in C^{\infty}_0\left(\mathbb{R}\right)$ such that $\Psi\left(t\right) =1$ on $\left[-1,1\right]$ and $\Psi\left(t\right) =0$ outside of $\left[-2,2\right]$. We have
\begin{align*}
\left\| \Psi \left(\frac{t}{T}\right)U\left(t\right) u_0\right\|_{Y_S^{0}} &\lesssim \left\|u_0\right\|_{L^2_x},\\
\left\|\Psi\left(\frac{t}{T}\right) \int_0^t U\left(t-s\right) F\left(s\right) ds\right\|_{Y_S^{0}} &\lesssim  \left\|F\left(t\right) \right\|_{Z_S^{0}}, \\
\left\| \Psi \left(\frac{t}{T}\right)\partial_tV\left(t\right) n_0\right\|_{Y_W^{-1/2}} &\lesssim \left\|n_0\right\|_{H^{-\frac{1}{2}}_x} ,\\
\left\| \Psi \left(\frac{t}{T}\right)V\left(t\right) n_1\right\|_{Y_W^{-1/2}} &\lesssim  \left\|n_1\right\|_{H^{-\frac{3}{2}}_x}, \\
\left\|\Psi\left(\frac{t}{T}\right) \int_0^t V\left(t-s\right) \partial_xF\left(s\right) ds\right\|_{Y_W^{-1/2}} &\lesssim \left\|F\left(t\right) \right\|_{Z_W^{-1/2}},
\end{align*}
where $U\left(t\right) = e^{i\alpha t \partial^2_x}$ and $V\left(t\right) = \frac{\sin \left(\beta t \left(- \partial_x^2\right)^{1/2}\right)}{\beta \left(- \partial_x^2\right)^{1/2}}$. 
\end{lemma}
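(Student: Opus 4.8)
The plan is to reduce all five bounds to the by-now standard linear theory for Bourgain spaces adapted to the Schr\"odinger and half-wave characteristic surfaces $\tau = \alpha k^2$ and $|\tau| = \beta|k|$, following the template of Ginibre--Tsutsumi--Velo and \cite{Takaoka:1999uw}. The two homogeneous Schr\"odinger-type bounds are the most transparent: I would take the space-time Fourier transform of $\Psi(t/T) U(t) u_0$, which factorizes as $\widehat{u_0}(k)\, T\widehat{\Psi}\left(T(\tau-\alpha k^2)\right)$, so that the $X_S^{0,1/2}$ piece of the $Y_S^0$ norm separates into $\|\widehat{u_0}\|_{\ell^2_k}$ times $\|\langle\sigma\rangle^{1/2} T\widehat{\Psi}(T\sigma)\|_{L^2_\sigma}$ after the substitution $\sigma = \tau - \alpha k^2$; the latter factor is finite because $\Psi\in C_0^\infty$ has Schwartz-class Fourier transform. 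The auxiliary $\ell^2_k L^1_\tau$ component of the $Y_S^0$ norm is controlled identically using $\|T\widehat\Psi(T\,\cdot)\|_{L^1_\sigma}<\infty$. Both factors are $T$-dependent but finite, which is all that is needed on the compact interval $[0,T]$.

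For the two homogeneous wave bounds I would write $\partial_t V(t)=\cos(\beta t\sqrt{-\Delta})$ and $V(t)=\frac{\sin(\beta t\sqrt{-\Delta})}{\beta\sqrt{-\Delta}}$, and decompose each into the two half-wave exponentials $e^{\pm i\beta t|k|}$. Each exponential concentrates the space-time Fourier support on $\tau=\pm\beta|k|$, which is exactly the surface weighted by $\langle|\tau|-\beta|k|\rangle$ in $X_W^{s,b}$, so the same cutoff computation as in the Schr\"odinger case applies mode by mode. The bookkeeping of Sobolev orders is what matches the three different regularities: $\partial_t V$ is order zero and sends $n_0\in H_x^{-1/2}$ into $Y_W^{-1/2}$, whereas the factor $(\beta\sqrt{-\Delta})^{-1}\sim|k|^{-1}$ in $V$ upgrades $n_1\in H_x^{-3/2}$ into the same space $Y_W^{-1/2}$. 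The zero mode $k=0$, where $V$ is singular, is harmless because we have reduced to mean-zero data for $n$ via \eqref{eq: Gal trans}.

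The genuine work, and the step I expect to be the main obstacle, is the pair of inhomogeneous Duhamel bounds. These are endpoint estimates: the relevant regularity gap is $b=\tfrac12$ against $b'=-\tfrac12$, and at this endpoint the naive $X^{s,1/2}$ bound fails together with the embedding $X^{s,1/2}\hookrightarrow C_tH^s_x$. This is precisely why the solution spaces $Y_{S,W}^s$ and the source spaces $Z_{S,W}^s$ carry the extra $\ell^2_k L^1_\tau$ terms. I would split the Duhamel integral using the cutoff $\Psi(t/T)$ and decompose the source into the region far from the characteristic surface, where $\langle\tau-\alpha k^2\rangle\gtrsim 1$ (resp. $\langle|\tau|-\beta|k|\rangle\gtrsim 1$) absorbs two powers of the weight and the estimate is non-endpoint, and the near-characteristic region, where the $\ell^2_k L^1_\tau$ norm built into $Z_{S,W}^s$ produces an absolutely convergent time integral whose linear propagation stays in the $\ell^2_k L^1_\tau$ part of $Y_{S,W}^s$. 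Assembling the two regions, and carrying the extra derivative $\partial_x$ in the wave Duhamel term against the index $-\tfrac12$, yields the stated bounds. Since this is the standard endpoint argument specialized to the two surfaces above, I would present only the structural decomposition and defer the routine computations to \cite{Takaoka:1999uw,Colliander:2008cq}.
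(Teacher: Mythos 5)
The paper does not actually prove this lemma: it explicitly omits the proof as standard and defers to the cited references, and your sketch is precisely the standard argument from that literature (factorization of the cutoff free evolution on the Fourier side for the homogeneous bounds, half-wave decomposition for the wave propagators, and the near/far-from-characteristic splitting with the auxiliary $\ell^2_k L^1_\tau$ components for the endpoint Duhamel estimates), so it is correct and consistent with the paper's intent. The only point worth tightening is that the constants should be checked to be uniform for $0<T\leq 1$ (they are, by the scaling $\langle \sigma'/T\rangle^{1/2}\leq T^{-1/2}\langle\sigma'\rangle^{1/2}$), since the global argument in Section 5 iterates over small time steps.
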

Lemma \ref{lem:linear est.} will be used for the contraction mapping principle, and so it is well-known estimates. For that reason, we omit the proof of Lemma \ref{lem:linear est.}, but it can be found in \cite{Bourgain:1993hz,Bourgain:1993cl,Bourgain:1994ej,Colliander:2008cq,KENIG:1996tq,Kenig:1996bu}. \\
\\
From now on, we discuss bilinear estimates with respect to the Schr\"odinger and the wave flow. Define the resonant set and nonresonant set as follows,
\begin{equation*}
\begin{aligned}
N_{\mathcal{R}} &= \left\{\left(k_0,k_1,k_2\right):k_0 \sim k_1 \sim k_2\right\} \\
N_{\mathcal{NR}} &= \left(N_{\mathcal{R}}\right)^C.
\end{aligned}
\end{equation*}

\begin{proposition}\label{prop:resonant bi est}
Let $N_i$ be dyadic numbers, $N_i \sim \left|k_i\right|$ and $0< T < 1$. Assume that $\frac{\beta}{\alpha}$ is not an integer,
\begin{equation}\label{eq:1st resonant bilinear est.}
\left\|P_{N_0}\left(P_{N_1} u P_{N_2} v\right)\right\|_{Z^{0}_S\left(\left[0,T\right] \times \mathbb{T}\right)} \lesssim T^{\gamma} \left\|u\right\|_{Y^{0}_S}  \left\|v\right\|_{Y^{-1/2}_W},
\end{equation}
and
\begin{equation}\label{eq:2nd resonant bilinear est.}
\left\|P_{N_0}\partial_x\left( P_{N_1} u \overline{P_{N_2} v}\right)\right\|_{Z^{-1/2}_W\left(\left[0,T\right] \times \mathbb{T}\right)} \lesssim T^{\gamma} \left\|u\right\|_{Y^{0}_S}  \left\|v\right\|_{Y^{0}_S}.
\end{equation}
for 
sufficiently small $\gamma > 0$.
\end{proposition}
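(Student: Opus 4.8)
The plan is to prove both inequalities by the standard duality-plus-resonance method in the Bourgain spaces, the point being that on the torus the entire gain must be extracted from the transversality of the Schr\"odinger parabola $\tau=\alpha k^2$ and the wave cone $\tau=\pm\beta\left|k\right|$. First I would discard the $\ell^2_kL^1_\tau$ pieces of the $Y$ and $Z$ norms, which obey the same mechanism and are controlled once the $X^{s,b}$ parts are, so that for the first estimate it suffices to establish
\[
\left\|P_{N_0}\left(P_{N_1}u\,P_{N_2}v\right)\right\|_{X^{0,-1/2}_S}\lesssim T^{\gamma}\left\|u\right\|_{X^{0,1/2}_S}\left\|v\right\|_{X^{-1/2,1/2}_W}.
\]
Dualizing against $g\in X^{0,1/2}_S$ turns this into a trilinear form over the convolution set $k_0=k_1+k_2$, $\tau_0=\tau_1+\tau_2$, and I would set $\sigma_0=\tau_0-\alpha k_0^2$, $\sigma_1=\tau_1-\alpha k_1^2$, $\sigma_2=\left|\tau_2\right|-\beta\left|k_2\right|$.

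The key step is the resonance identity. Eliminating the $\tau_j$ through the convolution constraint and using $k_0^2-k_1^2=k_2(2k_1+k_2)$ gives
\[
\sigma_0-\sigma_1\mp\sigma_2=\Omega,\qquad \Omega:=\alpha\left(k_0^2-k_1^2\right)\mp\beta\left|k_2\right|=\left|k_2\right|\left(\alpha\,\ON{sgn}(k_2)(2k_1+k_2)\mp\beta\right).
\]
Since $2k_1+k_2\in\mathbb{Z}$ while $\frac{\beta}{\alpha}\notin\mathbb{Z}$, the hypothesis yields the crucial bound
\[
\left|\Omega\right|\ge\alpha\left|k_2\right|\,\d\!\left(\tfrac{\beta}{\alpha},\mathbb{Z}\right)\gtrsim N_2,\qquad\text{hence}\qquad \max\left(\left<\sigma_0\right>,\left<\sigma_1\right>,\left<\sigma_2\right>\right)\gtrsim N_2.
\]
This is exactly the gain of $\left<k_2\right>^{1/2}=N_2^{1/2}$ that I need: whichever modulation is largest, its $\left<\sigma\right>^{1/2}$ dominates $\left<k_2\right>^{1/2}$, cancelling the derivative loss from the $H^{-1/2}_x$ weight on $v$. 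For the second estimate the conjugation sends the $\overline{v}$-characteristic to $\tau=-\alpha k^2$, so the analogous resonance function is
\[
\Omega':=\alpha\left(k_1^2-k_2^2\right)\mp\beta\left|k_0\right|=\left|k_0\right|\left(\alpha\,\ON{sgn}(k_0)(k_1-k_2)\mp\beta\right),\qquad \left|\Omega'\right|\gtrsim N_0,
\]
which absorbs the $\partial_x$-derivative loss $\left|k_0\right|\left<k_0\right>^{-1/2}\sim N_0^{1/2}$.

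After extracting the maximal modulation I would split into the three cases according to which of $\sigma_0,\sigma_1,\sigma_2$ is largest; in each case the residual multiplier carries only two factors $\left<\sigma\right>^{-1/2}$. When both surviving weights sit on Schr\"odinger-type factors I would close with two applications of the periodic $L^4_{t,x}$ Strichartz estimate $\left\|u\right\|_{L^4_{t,x}}\lesssim\left\|u\right\|_{X^{0,3/8}_S}$ against the remaining factor in $L^2$; when a wave factor is involved I would instead argue by Cauchy--Schwarz in the convolution variables, where the two modulation weights render the measure of the admissible resonant frequency set finite, the relevant counting being governed precisely by the transversality of the parabola and the two lines. Because $3/8<1/2$, and because the resonance bound produces slightly more decay than strictly required, there is room to lower the two surviving weights to $\left<\sigma\right>^{-1/2+\gamma}$; the time-localization estimate $\left\|f\right\|_{X^{s,b'}}\lesssim T^{\,b-b'}\left\|f\right\|_{X^{s,b}}$ on $[0,T]$ for $-\tfrac12<b'<b\le\tfrac12$ then produces the factor $T^{\gamma}$.

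The main obstacle I anticipate is the regime where the wave frequency is small, i.e.\ $N_2\to 0$ in the first estimate and $N_0\to 0$ in the second. There the $1$-dimensional wave flow is non-dispersive, no Strichartz smoothing is available for the wave factor, and the resonance lower bound $\left|\Omega\right|\gtrsim N_2$ degenerates, so the whole estimate must be driven by transversality counting rather than by any individual $L^4$ bound. I would treat this low wave-frequency regime, together with the borderline high-high-high interactions, by a direct $L^2$ argument, using the mean-zero normalization of \eqref{eq: Gal trans} to exclude the $k=0$ mode; the delicate point on which the proposition ultimately rests is verifying that the gap $\d(\frac{\beta}{\alpha},\mathbb{Z})>0$ furnished by the non-integer hypothesis is uniform across all of these interactions.
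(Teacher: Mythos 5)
Your proposal follows essentially the same route as the paper: the same resonance identity exploiting $\frac{\beta}{\alpha}\notin\mathbb{Z}$ (the paper's \eqref{eq:1st resonant est.}--\eqref{eq:2nd resonant est.}), duality over the convolution constraint, the periodic $L^4$ Strichartz embedding $X^{0,3/8}_S\subset L^4_{t,x}$, and the factor $T^{\gamma}$ extracted from the time-localization lemma by slightly lowering the modulation exponents, which is exactly the paper's modified embedding \eqref{eq: new embedding}. The only blemish is a harmless algebra slip in your second resonance function, where the bracket should contain $k_1+k_2$ rather than $k_1-k_2$ (since $k_1^2-k_2^2=k_0\left(k_1+k_2\right)$ with $k_0=k_1-k_2$); as $k_1+k_2\in\mathbb{Z}$, the lower bound $\left|\Omega'\right|\gtrsim N_0$ and the rest of the argument are unaffected.
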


\begin{proposition}\label{prop:bilinear est.}
Let $N_i$ be dyadic numbers and $N_i \sim \left|k_i\right|$. Assume that $\frac{\beta}{\alpha}$ is not an integer,
\begin{equation}\label{eq:1st bilinear est.}
\left\|P_{N_0}\left(P_{N_1} u P_{N_2} v\right)\right\|_{Z^{0}_S} \lesssim N^{-\delta}_{max} \left\|u\right\|_{Y^{0}_S}  \left\|v\right\|_{Y^{-1/2}_W},
\end{equation}
and
\begin{equation}\label{eq:2nd bilinear est.}
\left\|P_{N_0}\partial_x\left( P_{N_1} u \overline{P_{N_2} v}\right)\right\|_{Z^{-1/2}_W} \lesssim N^{-\delta}_{max} \left\|u\right\|_{Y^{0}_S}  \left\|v\right\|_{Y^{0}_S}.
\end{equation}
for $\left(k_0,k_1,k_2\right) \subset N_{\mathcal{NR}}$ and sufficiently small $\delta > 0$.
\end{proposition}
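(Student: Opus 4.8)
The plan is to treat \eqref{eq:1st bilinear est.} and \eqref{eq:2nd bilinear est.} as standard Bourgain-space bilinear estimates, where the no-gain versions are essentially those of Takaoka \cite{Takaoka:1999uw} and the new content is the decay $N^{-\delta}_{max}$, which I will extract from the strict nonresonance of the interaction. Since the $Y$- and $Z$-norms are the $X^{s,b}$ spaces augmented by an $l^2_kL^1_\tau$ correction, I first prove the bounds for the principal $X^{s,b}$ components and then check that the $l^2_kL^1_\tau$ pieces satisfy the same estimates; the latter reduce to the former by Cauchy--Schwarz in $\tau$ on the modulation-localized dyadic blocks, so they are not the heart of the matter. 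For the $X^{s,b}$ part of \eqref{eq:1st bilinear est.} I dualize against a test function $F_0\in l^2_kL^2_\tau$ with $\|F_0\|=1$ and reduce, via Plancherel, to bounding the trilinear form
\[
\mathcal{I}=\int_{*}\frac{\left<k_2\right>^{1/2}}{\left<\sigma_0\right>^{1/2}\left<\sigma_1\right>^{1/2}\left<\sigma_2\right>^{1/2}}\,F_0\,F_1\,F_2,
\]
where $F_i\ge0$, $\|F_1\|\lesssim\|u\|_{Y^0_S}$, $\|F_2\|\lesssim\|v\|_{Y^{-1/2}_W}$ (the $\left<k_2\right>^{1/2}$ compensating the $\left<k\right>^{-1/2}$ weight of the wave norm), the integral $\int_*$ runs over $k_0=k_1+k_2$, $\tau_0=\tau_1+\tau_2$ with $|k_i|\sim N_i$, and $\sigma_0=\tau_0-\alpha k_0^2$, $\sigma_1=\tau_1-\alpha k_1^2$, $\sigma_2=|\tau_2|-\beta|k_2|$ are the output-Schr\"odinger, input-Schr\"odinger, and input-wave modulations. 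Since the factors carry genuine Littlewood--Paley projections $P_{N_i}$ with $N_i$ dyadic, all three frequencies are nonzero, $|k_i|\ge1$, which disposes of the zero-mode issue automatically.

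The decisive step is the resonance identity. A direct computation using the frequency and time-frequency constraints gives, for suitable signs $\epsilon,\eta\in\{\pm1\}$,
\[
\sigma_0-\sigma_1-\epsilon\,\sigma_2=-\alpha\,k_2\left((k_0+k_1)-\eta\tfrac{\beta}{\alpha}\right),
\]
so that the maximal modulation $L_{max}:=\max(\left<\sigma_0\right>,\left<\sigma_1\right>,\left<\sigma_2\right>)$ obeys
\[
L_{max}\gtrsim|k_2|\,\big|(k_0+k_1)-\eta\tfrac{\beta}{\alpha}\big|\gtrsim|k_2|\,\max\!\big(\mathrm{dist}(\tfrac{\beta}{\alpha},\mathbb{Z}),\,|k_0+k_1|\big).
\]
Here the hypothesis $\tfrac{\beta}{\alpha}\notin\mathbb{Z}$ enters: it forces $|(k_0+k_1)-\eta\tfrac{\beta}{\alpha}|\ge\mathrm{dist}(\tfrac{\beta}{\alpha},\mathbb{Z})>0$, excluding the degenerate resonance that would destroy the gain. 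Combining this with the elementary inequality $N_{max}\lesssim\max(|k_2|,|k_0+k_1|)$ and with the defining property of $N_{\mathcal{NR}}$ — that the three frequencies are \emph{not} all comparable — yields in every nonresonant configuration the uniform bound $L_{max}\gtrsim N_{max}$ and, more sharply, $\left<k_2\right>^{1/2}L_{max}^{-1/2}\lesssim N^{-1/2}_{max}$. The same scheme governs \eqref{eq:2nd bilinear est.}: there the output is of wave type, both inputs are Schr\"odinger, the outer $\partial_x$ supplies a factor $|k_0|\ge1$, and the analogous identity $\epsilon\,\sigma_0-\sigma_1+\sigma_2=\alpha\,k_0\big((k_1+k_2)-\eta\tfrac{\beta}{\alpha}\big)$ gives $L_{max}\gtrsim|k_0|\max(\mathrm{dist}(\tfrac{\beta}{\alpha},\mathbb{Z}),|k_1+k_2|)\gtrsim N_{max}$ off the resonant set.

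With the gain secured, I close the estimate by Hölder's inequality in the physical variables: writing $\mathcal{I}$ as an $L^2_{t,x}$ pairing of products, I place the factor carrying the maximal modulation in $L^2_{t,x}$ (Plancherel) and the remaining two in $L^4_{t,x}$, to which I apply the periodic $L^4$-Strichartz estimates for the Schr\"odinger and wave propagators \cite{Bourgain:1993hz}; these cost only a weight $\left<\sigma\right>^{3/8}$ and are therefore absorbed by the surviving $\left<\sigma\right>^{-1/2}$ factors. After extracting the scalar $\left<k_2\right>^{1/2}L_{max}^{-1/2}\lesssim N^{-1/2}_{max}$, this gives $\mathcal{I}\lesssim N^{-1/2}_{max}\|u\|_{Y^0_S}\|v\|_{Y^{-1/2}_W}$, and \eqref{eq:1st bilinear est.} follows for any $\delta<1/2$ once the geometrically convergent dyadic sum over $N_0,N_1,N_2$ is carried out — convergence being precisely what the $N^{-\delta}_{max}$ gain provides. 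The $l^2_kL^1_\tau$ correction terms are handled by the same case analysis, invoking Lemma \ref{lem:linear est.} where the $X^{s,b}$ duality is not directly available.

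The main obstacle is the endpoint nature of the estimate. Because the solution spaces sit at $b=\tfrac12$, the naive $X^{s,b}$ Hölder/Strichartz argument is borderline and one must carry the $l^2_kL^1_\tau$ components of the $Y,Z$ norms throughout; the genuinely delicate case is the one in which the \emph{output} modulation $\left<\sigma_0\right>$ dominates, where the negative weight of the $Z$-norm cannot absorb $L_{max}$ and the bound must be read through the dual test function. Verifying the periodic $L^4$ estimates for the mixed Schr\"odinger--wave interaction (rather than a single propagator) and organizing the three dyadic summations uniformly across the sign configurations $\epsilon,\eta$ is where the bulk of the technical effort lies; the algebraic core, by contrast, is entirely contained in the resonance identity above, whose non-vanishing is guaranteed by $\tfrac{\beta}{\alpha}\notin\mathbb{Z}$.
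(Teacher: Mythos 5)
Your overall strategy is the same as the paper's: the algebraic resonance identity (the paper's \eqref{eq:1st resonant est.}--\eqref{eq:2nd resonant est.}), a case analysis on which of the three modulations is maximal, conversion of the maximal modulation into frequency decay using the nonresonance of $(k_0,k_1,k_2)$ and the hypothesis $\tfrac{\beta}{\alpha}\notin\mathbb{Z}$, closure by H\"older plus the periodic $L^4$ Strichartz estimate $X_S^{0,3/8}\subset L^4_{t,x}$, and a Cauchy--Schwarz reduction of the $l^2_kL^1_\tau$ component to the $L^2_\tau$ case. Your observation that $|k_0+k_1|\gtrsim N_{max}$ in every nonresonant configuration, hence $\left<k_2\right>^{1/2}L_{max}^{-1/2}\lesssim N_{max}^{-1/2}$, is correct and matches the paper's cases.

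There is, however, one step that fails as written: you assert that the \emph{wave} factor can be placed in $L^4_{t,x}$ at the cost of only a modulation weight $\left<\sigma\right>^{3/8}$. On the one-dimensional torus the half-wave propagator is a translation, so $\|e^{\pm i\beta t|\partial_x|}f\|_{L^4_{t,x}([0,T]\times\mathbb{T})}\sim T^{1/4}\|f\|_{L^4_x}$, and one must pay roughly a quarter of a spatial derivative via Sobolev embedding: $X_W^{0,b}\not\subset L^4_{t,x}$ for any $b$. This matters precisely in the case where the maximal modulation is one of the Schr\"odinger modulations $L_0,L_1$: after you spend the entire factor $\left<k_2\right>^{1/2}L_{max}^{-1/2}$ to extract $N_{max}^{-1/2}$, the wave factor is left with no spatial weight and cannot be put in $L^4_{t,x}$. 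The fix is the one the paper uses: do not convert the full resonance gain into $N_{max}^{-1/2}$, but retain a negative power of the relevant frequency (the paper keeps $|k_2|^{-(1/2-\delta)}$ or $|k_0|^{-(1/2-\delta)}$, cf.\ its bounds \eqref{eq: worst case1}--\eqref{eq: worst bound}) and use it to pay for the embedding $X_W^{1/2-\delta,1/2}\subset L^4_{t,x}$ (or $L^2_tH^{1/2-\gamma}_x\subset L^2_tL^4_x$). The net gain then drops from $N_{max}^{-1/2}$ to a smaller positive power, which is still ample for the claimed $N_{max}^{-\delta}$ with $\delta$ small. With this correction your argument coincides with the paper's; without it, the dominant-Schr\"odinger-modulation case does not close.
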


Takaoka \cite{Takaoka:1999uw} proved these type estimates (without dyadic decompositions) using the arguments in \cite{KENIG:1996tq, KENIG:1993ts}, but we have to show slightly stronger estimates for the approximation step in the proof of Theorem \ref{thm: main thm}. In addition, we prove the propositions using a little simpler argument in \cite{Tao:2001tu} than in \cite{Takaoka:1999uw,KENIG:1996tq, KENIG:1993ts}. Roughly, Proposition \ref{prop:bilinear est.} can be deduced by observation of the intersection of hyperspace $\tau = h\left(k\right)$ (a function $h$ be chosen by each equation).  We first have the following algebraic results for observation of resonant relations.

\begin{lemma}
Let ${\rm sgn}(x)$ be the sign function. Then \\
{\rm i)} Let $\tau_0 = \tau_1 + \tau_2$, $k_0  = k_1 + k_2 \ne 0$,
\begin{equation}\label{eq:1st resonant est.}
 \max \left\{ \left|\tau_0 - \alpha k_0^2\right|, \left|\tau_1 - \alpha k_1^2\right|, \left|\left|\tau_2\right| - \beta\left| k_2\right|\right|\right\} \gtrsim \left|\alpha\right| \left|k_2\right| \left|k_0+k_1 - \frac{\beta}{\alpha} S_1\right|,
\end{equation}
for $S_1 = {\rm sgn}\left(\tau_2 k_2\right)$.\\
{\rm ii)} Let $\tau_0 = \tau_1 - \tau_2$, $k_0  = k_1 - k_2 \not = 0$,
\begin{equation}\label{eq:2nd resonant est.}
 \max \left\{\left|\left|\tau_0\right| - \beta\left| k_0\right|\right|, \left|\tau_1 - \alpha k_1^2\right|, \left|\tau_2 - \alpha k_2^2\right|\right\} \gtrsim \left|\alpha\right| \left|k_0\right| \left|k_1 + k_2 - \frac{\beta}{\alpha} S_2\right|,
\end{equation}
for $S_2 = {\rm sgn}\left(\tau_0 k_0\right)$.
\end{lemma}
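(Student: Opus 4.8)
The plan is to prove both inequalities by the same elementary device that underlies the $X^{s,b}$ method: encode each of the three terms inside the maximum as a \emph{modulation variable}, then form a signed linear combination of these variables that cancels all of the temporal frequencies $\tau_j$ by means of the additive constraint, leaving a pure polynomial in the spatial frequencies $k_j$. The triangle inequality (a signed sum of three reals is bounded in modulus by three times the largest modulus) then converts the resulting algebraic identity into the desired lower bound for the maximum, with an absolute implicit constant.

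For part i) I would set $\sigma_0 = \tau_0 - \alpha k_0^2$, $\sigma_1 = \tau_1 - \alpha k_1^2$, and $\sigma_2 = |\tau_2| - \beta|k_2|$, so that the left-hand side is exactly $\max\{|\sigma_0|,|\sigma_1|,|\sigma_2|\}$. Using $\tau_0 = \tau_1 + \tau_2$ together with the factorization $k_0^2 - k_1^2 = (k_0-k_1)(k_0+k_1) = k_2(k_0+k_1)$, one computes $\sigma_0 - \sigma_1 = \tau_2 - \alpha k_2(k_0+k_1)$. The only subtlety is the absolute value in the wave modulation $\sigma_2$; I would clear it by multiplying by ${\rm sgn}(\tau_2)$, noting that ${\rm sgn}(\tau_2)|\tau_2| = \tau_2$ and ${\rm sgn}(\tau_2)|k_2| = {\rm sgn}(\tau_2){\rm sgn}(k_2)k_2 = S_1 k_2$. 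Subtracting then cancels $\tau_2$ and yields the clean identity
\[
(\sigma_0 - \sigma_1) - {\rm sgn}(\tau_2)\,\sigma_2 = -\alpha k_2\Big(k_0 + k_1 - \tfrac{\beta}{\alpha}S_1\Big),
\]
whose right-hand side has modulus $|\alpha|\,|k_2|\,|k_0+k_1-\frac{\beta}{\alpha}S_1|$, while its left-hand side is at most $|\sigma_0|+|\sigma_1|+|\sigma_2| \le 3\max\{|\sigma_0|,|\sigma_1|,|\sigma_2|\}$. This is precisely \eqref{eq:1st resonant est.}.

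Part ii) follows the identical template after swapping which factor carries the wave dispersion relation: set $\sigma_0 = |\tau_0| - \beta|k_0|$, $\sigma_1 = \tau_1 - \alpha k_1^2$, and $\sigma_2 = \tau_2 - \alpha k_2^2$. Now $\tau_1 - \tau_2 = \tau_0$ and $k_1^2 - k_2^2 = k_0(k_1+k_2)$ give $\sigma_1 - \sigma_2 = \tau_0 - \alpha k_0(k_1+k_2)$, and multiplying the wave factor by ${\rm sgn}(\tau_0)$ produces
\[
(\sigma_1 - \sigma_2) - {\rm sgn}(\tau_0)\,\sigma_0 = -\alpha k_0\Big(k_1 + k_2 - \tfrac{\beta}{\alpha}S_2\Big), \qquad S_2 = {\rm sgn}(\tau_0 k_0),
\]
and the same three-term triangle inequality gives \eqref{eq:2nd resonant est.}. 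I do not anticipate any genuine obstacle here: the statement is purely algebraic, and the single point requiring care is the consistent bookkeeping of the signs $S_1, S_2$ arising from the absolute values $|\tau_j|, |k_j|$ in the wave dispersion relation. (The hypothesis that $\beta/\alpha$ is not an integer plays no role in this lemma; it enters only downstream, when these identities are used to bound $|k_0+k_1-\frac{\beta}{\alpha}S|$ from below in the proof of Proposition \ref{prop:bilinear est.}.)
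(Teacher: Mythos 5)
Your proof is correct and follows essentially the same route as the paper's: the paper likewise forms the signed combination $(\tau_0-\alpha k_0^2)-(\tau_1-\alpha k_1^2)-(\tau_2-\beta S_1 k_2)$ (and its analogue for ii)), cancels the temporal frequencies via the convolution constraint, factors $k_0^2-k_1^2=k_2(k_0+k_1)$, and concludes by the triangle inequality. Your version merely makes the sign bookkeeping for the wave modulation and the final triangle-inequality step explicit, which the paper leaves implicit.
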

\begin{proof} We first prove i). From the support of time and spatial frequencies, 
\begin{align*}
\tau_0 - \alpha k_0^2  - \tau_1 + \alpha k_1^2 - \tau_2 + \beta S_1 k_2  &= \tau_0 - \tau_1 - \tau_2 + \alpha\left(k_1^2 - k_0^2 - \frac{\beta}{\alpha} S_1 k_2\right) \\
&= -\alpha k_2 \left( k_0 +k_1 - \frac{\beta}{\alpha}S_1\right).
\end{align*}
Similarly,
\begin{align*}
\tau_0 - \beta S_2 k_0 - \tau_1 + \alpha k_1^2 + \tau_2 - \alpha k_2^2  &= \tau_0 - \tau_1 + \tau_2 + \alpha\left(k_1^2 - k_2^2 - \frac{\beta}{\alpha} S_2 k_0\right) \\
&= \alpha k_0 \left( k_1 +k_2 - \frac{\beta}{\alpha}S_1\right).
\end{align*}
Therefore, we are done.
\end{proof}

We can obtain Proposition \ref{prop:resonant bi est} by the similar argument of Proposition \ref{prop:bilinear est.}, so we first prove the latter.

\begin{proof}[Proof of Proposition \ref{prop:bilinear est.}]
Let $L_i := \left<\tau_i - \alpha k_i^2\right>$ and $M_i = \left<\left|\tau_i\right| - \beta \left|k_i\right|\right>$. To prove (\ref{eq:1st bilinear est.}) and (\ref{eq:2nd bilinear est.}), we are going to show the $X^{s,b}$ part and $l^2_{k}L^1_{\tau}$ part respectively. However, essential ideas are similar.
From \cite{Bourgain:1993hz}, we have the Strichartz estimate for the Schr\"odinger equation, 
\begin{equation}\label{eq:Strichartz}
X_S^{0,3/8} \subset L^4_{t,x},
\end{equation}
which will be used many times in the following calculation. We first consider the $X^{s,b}$ part of (\ref{eq:1st bilinear est.}). The left hand side of (\ref{eq:1st bilinear est.}) can be rewritten by the Plancherel theorem, so our claim is 
\begin{equation}\label{eq:claim of the 1st bilinear est.}
\left\|\sum_{\substack{k_0=k_1+k_2 \\ (k_0,k_1,k_2) \in N_{\mathcal{NR}}}}\int_{\tau_0 = \tau_1 + \tau_2} \frac{\left|k_2\right|^{1/2}}{L_0^{1/2}L_1^{1/2}M_2^{1/2}}\tilde u\tilde v d\tau_1\right\|_{l^2_{k_0}L^2_{\tau_0}} \lesssim N_{max}^{-\delta}\left\|u\right\|_{L^2_{t,x}}\left\|v\right\|_{L^2_{t,x}}.
\end{equation}

\begin{enumerate}[i)]
\item $\max\left\{L_0, L_1,M_2\right\} = M_2$ \\
By (\ref{eq:1st resonant est.}), the left hand side of (\ref{eq:claim of the 1st bilinear est.}) is bounded by 
\begin{equation*}
\left\|\sum_{\substack{k_0=k_1+k_2 \\ (k_0,k_1,k_2) \in N_{\mathcal{NR}}}}\int_{\tau_0 = \tau_1 + \tau_2} \frac{\left|k_2\right|^{1/2}}{L_0^{1/2}L_1^{1/2}\left|\alpha\right|^{1/2}\left|k_2\right|^{1/2}\left|k_0+k_1- \frac{\beta}{\alpha}S_1\right|^{1/2}}\tilde u\tilde v d\tau_1\right\|_{l^2_{k_0}L^2_{\tau_0}}.
\end{equation*}
Hence, it is reduced to show that
\begin{equation*}\label{eq: nonreso goal}
\left\|\sum_{\substack{k_0=k_1+k_2 \\ (k_0,k_1,k_2) \in N_{\mathcal{NR}}}}\int_{\tau_0 = \tau_1 + \tau_2} \frac{N_{max}^{\delta}}{L_0^{1/2}L_1^{1/2}\left|k_0+k_1- \frac{\beta}{\alpha}S_1\right|^{1/2}}\tilde u\tilde v d\tau_1 \right\|_{l^2_{k_0}L^2_{\tau_0}} \lesssim \left\|u\right\|_{L^2_{t,x}}\left\|v\right\|_{L^2_{t,x}}.
\end{equation*}
Since $k_0 = k_1 + k_2$ and $(k_0,k_1,k_2) \subset N_{\mathcal{NR}}$, we have $k_{max}$ and $k_{med}$ such that $N_{max} \sim k_{max} \sim k_{med}$, and they are same sign. Thus, we have
\begin{equation*}
\frac{N_{max}^{\delta}}{\left|k_0+k_1- \frac{\beta}{\alpha}S_1\right|^{1/2}} \lesssim 1
\end{equation*}
for sufficiently small $\delta$. By duality, it is enough to show that
\begin{equation*}
\left|\iint_{\mathbb{R} \times \mathbb{T}} u_0u_1u_2 dx dt\right|\lesssim \left\|u_0\right\|_{X_{S}^{0,1/2}}\left\|u_1\right\|_{X_{{S}}^{0,1/2}}\left\|u_2\right\|_{L^2_{t,x}},
\end{equation*}
and then by the H\"older inequality and (\ref{eq:Strichartz}), we have
\begin{align*}
\left|\iint_{\mathbb{R} \times \mathbb{T}} u_0u_1u_2 dx dt\right| &\leq \left\|u_0\right\|_{L^4_{t,x}}\left\|u_1\right\|_{L^4_{t,x}}\left\|u_2\right\|_{L^2_{t,x}} \\
& \lesssim \left\|u_0\right\|_{X_{S}^{0,\frac{1}{2}}}\left\|u_1\right\|_{X_S^{0,\frac{1}{2}}}\left\|u_2\right\|_{L^2_{t,x}}.
\end{align*}

\item $\max\left\{L_0, L_1,M_2\right\} = L_i$\\
Without loss of generality, we may assume that $\max\left\{L_0, L_1,M_2\right\} = L_0$. From (\ref{eq:1st resonant est.}) again, the left hand side of (\ref{eq:claim of the 1st bilinear est.}) is bounded by 
\begin{equation*}
\left\|\sum_{\substack{k_0=k_1+k_2 \\ (k_0,k_1,k_2) \in N_{\mathcal{NR}}}}\int_{\tau_0 = \tau_1 + \tau_2} \frac{\left|k_2\right|^{1/2}}{\left|\alpha\right|^{1/2}\left|k_2\right|^{1/2}\left|k_0+k_1- \frac{\beta}{\alpha}S_1\right|^{1/2}L_1^{1/2}M_2^{1/2}}\tilde u\tilde v d\tau_1\right\|_{l^2_{k_0}L^2_{\tau_0}},
\end{equation*}
so the claim is 
\begin{equation}\label{eq:part of 1st AG}
\left\|\sum_{\substack{k_0=k_1+k_2 \\ (k_0,k_1,k_2) \in N_{\mathcal{NR}}}}\int_{\tau_0 = \tau_1 + \tau_2} \frac{N_{max}^{\delta}}{\left|k_0+k_1- \frac{\beta}{\alpha}S_1\right|^{1/2}L_1^{1/2}M_2^{1/2}}\tilde u\tilde v d\tau_1\right\|_{l^2_{k_0}L^2_{\tau_0}} \lesssim \left\|u\right\|_{L^2_{t,x}}\left\|v\right\|_{L^2_{t,x}}.
\end{equation}
Again, we have $k_0$ and $k_1$ in the former case, but we prove in a different way from i). There are several subcases. We first consider $\left|k_0\right| \sim \left|k_1\right|\gg\left|k_2\right| \sim N_{min}$. Since $k_0=k_1+k_2$, we have that $\left|k_0+k_1- \frac{\beta}{\alpha}S_1\right|$ is similar to $ N_{max}$. Thus, the left hand side of (\ref{eq:part of 1st AG}) is bounded by
\begin{equation*}
\left\|\sum_{\substack{k_0=k_1+k_2 \\ (k_0,k_1,k_2) \in N_{\mathcal{NR}}}}\int_{\tau_0 = \tau_1 + \tau_2} \frac{1}{\left|k_2\right|^{1/2-\delta}L_1^{1/2}M_2^{1/2}}\tilde u\tilde v d\tau_1\right\|_{l^2_{k_0}L^2_{\tau_0}}.
\end{equation*}
By duality, the claim equivalent to 
\begin{equation*}
\left|\iint_{\mathbb{R} \times \mathbb{T}} u_0u_1u_2 dx dt\right|\lesssim \left\|u_0\right\|_{L^2_{t,x}}\left\|u_1\right\|_{X_{S}^{0,1/2}}\left\|u_2\right\|_{X^{1/2-\delta, 1/2}_W}.
\end{equation*}
From the H\"older inequality, (\ref{eq:Strichartz}) and the Sobolev embedding with the time translation, 
\begin{align*}
\left|\iint_{\mathbb{R} \times \mathbb{T}} u_0u_1u_2 dx dt\right| &\leq \left\|u_0\right\|_{L^2_{t,x}}\left\|u_1\right\|_{L^4_{t,x}}\left\|u_2\right\|_{L^4_{t,x}} \\
& \lesssim \left\|u_0\right\|_{X_S^{0,0}}\left\|u_1\right\|_{X_{S}^{0,\frac{1}{2}}}\left\|u_2\right\|_{X^{1/2-\delta, \frac{1}{2}}_W},
\end{align*}
for sufficiently small $\delta$.
The remaining cases are $\left|k_1\right| \sim \left|k_2\right| \gg \left|k_0\right|$ or $\left|k_0\right| \sim \left|k_2\right| \gg \left|k_1\right|$. Then we have
\begin{equation}\label{eq: worst case1}
\frac{N^{\delta}_{max}}{\left|k_0+k_1-\frac{\beta}{\alpha}S_1\right|^{1/2}} \sim \frac{N^{\delta}_{max}}{\left|k_1\right|^{1/2}} \sim \frac{1}{\left|k_1\right|^{1/2-\delta}} \ll \frac{1}{\left|k_0\right|^{1/2-\delta}}
\end{equation}
or 
\begin{equation}\label{eq: worst case2}
\frac{N^{\delta}_{max}}{\left|k_0+k_1-\frac{\beta}{\alpha}S_1\right|^{1/2}} \sim \frac{N^{\delta}_{max}}{\left|k_0\right|^{1/2}} \sim \frac{1}{\left|k_0\right|^{1/2-\delta}}
\end{equation}
Thus, the left hand side of (\ref{eq:part of 1st AG}) is bounded by
\begin{equation}\label{eq: worst bound}
\left\|\sum_{\substack{k_0=k_1+k_2 \\ (k_0,k_1,k_2) \in N_{\mathcal{NR}}}}\int_{\tau_0 = \tau_1 + \tau_2} \frac{1}{\left|k_0\right|^{1/2-\delta}L_1^{1/2}M_2^{1/2}}\tilde u\tilde v d\tau_1\right\|_{l^2_{k_0}L^2_{\tau_0}}
\end{equation}
for both cases. By a similar calculation of the former case, we get the goal. More precisely, we need to show that
\begin{equation*}
\left|\iint_{\mathbb{R} \times \mathbb{T}} u_0u_1u_2 dx dt\right|\lesssim \left\|u_0\right\|_{L^2_{t}H^{1/2-\gamma}_x}\left\|u_1\right\|_{X_{S}^{0,1/2}}\left\|u_2\right\|_{X^{0, 1/2}_W}.
\end{equation*}
By the H\"older inequality, \eqref{eq:Strichartz}, and the Sobolev embedding,
\begin{align*}
\left|\iint_{\mathbb{R} \times \mathbb{T}} u_0u_1u_2 dx dt\right| &\leq \left\|u_0\right\|_{L^2_{t}L^4_x}\left\|u_1\right\|_{L^4_{t,x}}\left\|u_2\right\|_{L^4_{t}L^2_x} \\
& \lesssim \left\|u_0\right\|_{L^2_{t}H^{1/2-\gamma}_x}\left\|u_1\right\|_{X_{S}^{0,1/2}}\left\|u_2\right\|_{X^{0, 1/2}_W}.
\end{align*}
\end{enumerate}

We now consider $l^2_k L^1_{\tau}$ part,
\begin{equation}\label{eq:claim of the 1st bilinear est. l1 part}
\left\|\sum_{\substack{k_0=k_1+k_2 \\ (k_0,k_1,k_2) \in N_{\mathcal{NR}}}}\int_{\tau_0 = \tau_1 + \tau_2} \frac{\left|k_2\right|^{1/2}}{L_0L_1^{1/2}M_2^{1/2}}\tilde u\tilde v d\tau_1\right\|_{l^2_{k_0} L^1_{\tau_0}} \lesssim N^{-\delta}_{max} \left\|u\right\|_{Y^{0}_S}  \left\|v\right\|_{Y^{-1/2}_W}.
\end{equation}
First of all, from the Cauchy-Schwarz inequality and the fact that $L_0 = \left<\tau_0 - \alpha k_0^2\right>,$
\begin{align*}
&\left\|\sum_{\substack{k_0=k_1+k_2 \\ (k_0,k_1,k_2) \in N_{\mathcal{NR}}}}\int_{\tau_0 = \tau_1 + \tau_2} \frac{\left|k_2\right|^{1/2}}{L_0L_1^{1/2}M_2^{1/2}}\tilde u\tilde v d\tau_1\right\|_{l^2_{k_0} L^1_{\tau_0}} \\
&\lesssim \left\|\sum_{\substack{k_0=k_1+k_2 \\ (k_0,k_1,k_2) \in N_{\mathcal{NR}}}}\left\|\int_{\tau_0 = \tau_1 + \tau_2} \frac{\left|k_2\right|^{1/2}}{L_0^{15/32}L_1^{1/2}M_2^{1/2}}\tilde u\tilde v d\tau_1\right\|_{ L^2_{\tau_0}}\left|\int_{\mathbb{R}} \frac{1}{L_0^{17/16}} d\tau_0\right|^{1/2}\right\|_{l^2_{k_0}} \\
&\lesssim \left\|\sum_{\substack{k_0=k_1+k_2 \\ (k_0,k_1,k_2) \in N_{\mathcal{NR}}}}\int_{\tau_0 = \tau_1 + \tau_2} \frac{\left|k_2\right|^{1/2}}{L_0^{15/32}L_1^{1/2}M_2^{1/2}}\tilde u\tilde v d\tau_1\right\|_{l^2_{k_0} L^2_{\tau_0}}.
\end{align*}
We can use a similar calculation of $X^{s,b}$ part. More precisely, we can write our claim is,
\begin{equation}\label{eq:claim of the 1st bilinear est._Tail part}
\left\|\sum_{\substack{k_0=k_1+k_2 \\ (k_0,k_1,k_2) \in N_{\mathcal{NR}}}}\int_{\tau_0 = \tau_1 + \tau_2} \frac{\left|k_2\right|^{1/2}}{L_0^{15/32}L_1^{1/2}M_2^{1/2}}\tilde u\tilde v d\tau_1\right\|_{l^2_{k_0}L^2_{\tau_0}} \lesssim N_{max}^{-\delta}\left\|u\right\|_{L^2_{t,x}}\left\|v\right\|_{L^2_{t,x}}.
\end{equation}
The worst case is $\max\left\{L_0, L_1, M_2\right\} = L_0$ and $\left|k_2\right| \sim N_{max}$. By Lemma (\ref{eq:1st resonant est.}), the left hand side of (\ref{eq:claim of the 1st bilinear est._Tail part}) is bounded by 
\begin{equation*}
\left\|\sum_{\substack{k_0=k_1+k_2 \\ (k_0,k_1,k_2) \in N_{\mathcal{NR}}}}\int_{\tau_0 = \tau_1 + \tau_2} \frac{\left|k_2\right|^{1/2}}{\left|\alpha\right|^{15/32}\left|k_2\right|^{15/32}\left|k_0+k_1- \frac{\beta}{\alpha}S_1\right|^{15/32}L_1^{1/2}M_2^{1/2}}\tilde u\tilde v d\tau_1\right\|_{l^2_{k_0}L^2_{\tau_0}}.
\end{equation*}
We use \eqref{eq: worst case1} or \eqref{eq: worst case2} again, so it is bounded by \eqref{eq: worst bound}.From here, we can use the H\"older inequality, \ref{eq:Strichartz}, and the Sobolev embedding. Since $\frac{15}{32} > \frac{3}{8}$, the remaining cases can be proved by the similar process with $X^{s,b}$ part.

Next, we consider (\ref{eq:2nd bilinear est.}), but it can be proved by the similar calculation and using (\ref{eq:2nd resonant est.}) instead of (\ref{eq:1st resonant est.}). In other words, we can show that
\begin{equation}\label{eq:claim of the 2nd bilinear est.}
\begin{aligned}
\left\|\sum_{k_0=k_1-k_2}\int_{\tau_0 = \tau_1 - \tau_2} \frac{\left|k_0\right|^{1/2}}{M_0^{1/2}L_1^{1/2}L_2^{1/2}}\tilde u\tilde v d\tau_1\right\|_{l^2_{k_0}L^2_{\tau_0}} &\lesssim N_{max}^{-\delta}\left\|u\right\|_{L^2_{t,x}}\left\|v\right\|_{L^2_{t,x}}, \\
\left\|\sum_{k_0=k_1-k_2}\int_{\tau_0 = \tau_1 - \tau_2} \frac{\left|k_0\right|^{1/2}}{M_0L_1^{1/2}L_2^{1/2}}\tilde u\tilde v d\tau_1\right\|_{l^2_{k_0} L^1_{\tau_0}} &\lesssim N^{-\delta}_{max} \left\|u\right\|_{Y^{0}_S}  \left\|v\right\|_{Y^{0}_S}.
\end{aligned}
\end{equation}
As shown in (\ref{eq:claim of the 2nd bilinear est.}), it is similar to (\ref{eq:claim of the 1st bilinear est.}) and (\ref{eq:claim of the 1st bilinear est. l1 part}) except for indices. Hence, we can obtain (\ref{eq:2nd bilinear est.}) by (\ref{eq:2nd resonant est.}) and a similar argument for \eqref{eq:2nd bilinear est.}. 
\end{proof}

\begin{proof}[Proof of Proposition \ref{prop:resonant bi est}]
Note that Proposition \ref{prop:resonant bi est} has the time growth instead of the frequency decay. Hence, we need the following lemma.
\begin{lemma}[Lemma 2.11, \cite{Tao:2006tn}]\label{lem: time local Xsb}
Let $\eta$ be a Schwartz function in time and $I$ be a time interval. If $-1/2 < b' \leq b < 1/2$, then for any interval $\left[0,T\right] \subset I$ such that $0 < T < 1$, we have
\begin{equation*}
\left\|\eta\left(t/T\right)u\right\|_{X^{s,b'}\left(I \times \mathbb{T}\right)} \lesssim T^{b-b'} \left\|u\right\|_{X^{s,b}\left(I \times \mathbb{T}\right)}
\end{equation*}
\end{lemma}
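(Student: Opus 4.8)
The statement is the standard time-localization estimate for Bourgain spaces (quoted as Lemma 2.11 of \cite{Tao:2006tn}), so the plan is to strip it down to a one-dimensional weighted Sobolev inequality in the time variable and prove that by a Fourier-side kernel estimate. I would first prove the global-in-time version on $\mathbb{R}\times\mathbb{T}$; the stated restriction estimate on $I\times\mathbb{T}$ then follows by taking the infimum over extensions of $u$ from $I$ to $\mathbb{R}$. Next, since the cutoff $\eta(t/T)$ acts only in the time variable, the spatial weight $\langle k\rangle^s$ in the definition of $\|\cdot\|_{X^{s,b}}$ is untouched and factors out of both sides; hence it suffices to take $s=0$ and, after expanding the $\ell^2_k$ norm, to argue for each fixed frequency $k$ separately. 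For fixed $k$ the Schr\"odinger weight $\langle\tau-\alpha k^2\rangle$ is merely a translate of $\langle\tau\rangle$ by $\alpha k^2$, i.e.\ the modulation $e^{it\alpha k^2}$ in physical time; since multiplication by that scalar commutes with multiplication by $\eta(t/T)$, I conjugate it away. For the wave weight $\langle|\tau|-\beta|k|\rangle$ one localizes to $\{\tau>0\}$ and $\{\tau<0\}$, where the weight equals $\langle\tau-\beta|k|\rangle$ resp.\ $\langle\tau+\beta|k|\rangle$ and the same modulation reduction applies, the rapidly decaying cross terms produced across $\tau=0$ by the $\tau$-convolution being harmless. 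In all cases the lemma reduces to the scalar-in-time estimate
\[
\|\eta(t/T)f\|_{H^{b'}_t(\mathbb{R})}\lesssim T^{b-b'}\|f\|_{H^{b}_t(\mathbb{R})},\qquad -\tfrac12<b'\le b<\tfrac12,\ 0<T<1,
\]
for $f\colon\mathbb{R}\to\mathbb{C}$, with $\|f\|_{H^b_t}=\|\langle\tau\rangle^b\widehat f\|_{L^2_\tau}$.

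On the Fourier side multiplication by $\eta(t/T)$ becomes convolution by $K_T(\tau):=T\,\widehat\eta(T\tau)$, whose $L^1$ norm equals $\|\widehat\eta\|_{L^1}$ independently of $T$ and which is concentrated on $|\tau|\lesssim T^{-1}$ with rapid decay outside. Writing $g=\langle\tau\rangle^b\widehat f$ so that $\|g\|_{L^2}=\|f\|_{H^b}$, the claim becomes the $L^2\to L^2$ boundedness, with operator norm $\lesssim T^{b-b'}$, of the integral operator with kernel $\mathcal K(\tau,\sigma)=\langle\tau\rangle^{b'}\,|K_T(\tau-\sigma)|\,\langle\sigma\rangle^{-b}$. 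I would establish this by Schur's test, bounding $\sup_\tau\int\mathcal K(\tau,\sigma)\,d\sigma$ and $\sup_\sigma\int\mathcal K(\tau,\sigma)\,d\tau$ by $T^{b-b'}$. Splitting according to whether $|\tau|,|\sigma|$ are $\lesssim T^{-1}$ or $\gtrsim T^{-1}$ and using the decay of $K_T$ to localize $|\tau-\sigma|\lesssim T^{-1}$, the weight ratio $\langle\tau\rangle^{b'}\langle\sigma\rangle^{-b}$ is controlled on each piece; the resulting one-variable integrals of $\langle\cdot\rangle^{b'}$ and $\langle\cdot\rangle^{-b}$ over a window of length $\sim T^{-1}$ converge precisely because $b'<1/2$ and $b>-1/2$, and reassembling the powers of $T^{-1}$ produces exactly the gain $T^{b-b'}$. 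To keep the bookkeeping light I would first reduce the parameter range by bilinear interpolation to the corner cases $b'=b$ and the maximal-gain corner, handling the diagonal case $b'=b$ (where the target is the uniform bound $\|\eta(t/T)f\|_{H^b}\lesssim\|f\|_{H^b}$) by the multiplier property of $H^b$ for $|b|<1/2$, e.g.\ via the Gagliardo seminorm together with $|\eta(t/T)-\eta(s/T)|\lesssim\min(1,|t-s|/T)$.

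The reductions are routine; the crux is the scalar estimate, and within it the uniform-in-$T$ diagonal bound at $b'=b$. This is exactly where the two-sided restriction $-1/2<b'\le b<1/2$ is forced: multiplication by a fixed bump is bounded on $H^b$ only for $|b|<1/2$, and it is at the endpoints $b=\pm1/2$ that the weight integrals in the Schur test diverge. A second delicate point is to ensure the Schur integrals stay bounded \emph{uniformly} in $T\in(0,1)$ after extracting $T^{b-b'}$, rather than accumulating spurious negative powers of $T$; I would guard against this by always pairing the concentration scale $T^{-1}$ of $K_T$ against the worst-case weight on the opposite side of the convolution. I expect no trouble from the torus versus the line, since the entire argument takes place in the continuous time-frequency variable $\tau$.
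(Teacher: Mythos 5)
Your reductions are all sound: taking infima over extensions to pass from $I$ to $\mathbb{R}$, factoring out $\langle k\rangle^s$, conjugating away the modulation $e^{i\alpha k^2 t}$ at fixed $k$, and splitting the wave weight by the sign of $\tau$ (using $\left||\tau|-\beta|k|\right| = \min\left(|\tau-\beta|k||,|\tau+\beta|k||\right)$) all work, and your skeleton (scalar estimate plus diagonal case plus duality/interpolation) matches the standard architecture. The genuine gap is at your declared crux: the unweighted Schur test cannot produce the full gain $T^{b-b'}$, and your claimed bounds on the row and column integrals are false. Take $b'\le 0<b$. The row integral is attained near $\tau=0$, where $\int |K_T(\sigma)|\langle\sigma\rangle^{-b}\,d\sigma \sim T^{b}$, while the column integral at $\sigma=0$ is $\int \langle\tau\rangle^{b'}|K_T(\tau)|\,d\tau \sim T^{-b'}$ (e.g.\ for Gaussian $\eta$, by integrating over $1\le|\tau|\le 1/T$). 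Since $T^{b}$ and $T^{-b'}$ both exceed $T^{b-b'}$, Schur's test yields only $\sqrt{T^{b}\cdot T^{-b'}}=T^{(b-b')/2}$ — half the gain. This is not a bookkeeping issue: on the block $|\tau|,|\sigma|\lesssim T^{-1}$ the kernel is essentially flat of height $T$, hence close to rank one, and $L^1$-type row/column tests are intrinsically lossy for such kernels; moreover the true norm really is $\sim T^{b-b'}$ (test $\widehat f=\langle\sigma\rangle^{-b}1_{|\sigma|\le 1/T}$), so no interpolation of your Schur anchors with the diagonal case can recover the stated exponent.

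The repair is the $L^2$ mechanism that your own parenthetical "converge precisely because $b'<1/2$ and $b>-1/2$" betrays — those are Hilbert--Schmidt thresholds, not $L^1$ ones. On the block $|\tau|,|\sigma|\lesssim T^{-1}$, use Cauchy--Schwarz/Hilbert--Schmidt: $\|\mathcal K\|_{HS}\lesssim T\left(\int_{|\tau|\le 1/T}\langle\tau\rangle^{2b'}d\tau\right)^{1/2}\left(\int_{|\sigma|\le 1/T}\langle\sigma\rangle^{-2b}d\sigma\right)^{1/2}\sim T^{b-b'}$, valid exactly for $-1/2<b'$ and $b<1/2$; off this block the weights are comparable or the rapid decay of $K_T$ dominates, and Schur/Young suffices. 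This is in substance the paper's own proof: it reduces by duality to $b'=0\le b$, splits at modulation $\langle\tau-\alpha k^2\rangle\gtrless 1/T$, treats the high-modulation region trivially, and in the low-modulation region applies Cauchy--Schwarz in $\tau$ over the window of length $\sim 1/T$, producing $T^{1/2}\cdot T^{b-1/2}=T^{b}$. Separately, your diagonal hint also needs a patch: the naive Gagliardo split produces $\int_{\mathbb{R}}\min\left(1,|u|/T\right)^2|u|^{-1-2b}\,du\sim T^{-2b}$ paired with $\|f\|_{L^2}^2$, which is not uniform in $T$; either supplement it with the Sobolev embedding $H^b\hookrightarrow L^{2/(1-2b)}$ to exploit the concentration of $\eta(t/T)$ on a window of length $\sim T$, or write $\eta(t/T)=-\int T^{-1}\eta'(r/T)\,\chi_{(-\infty,r)}(t)\,dr$ and invoke the $r$- and $T$-uniform boundedness of sharp half-line cutoffs on $H^b$ for $|b|<1/2$.
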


\begin{proof}
By duality, we may assume that $0< b' \leq b < 1/2$. From the \emph{Christ-Kiselev lemma}, we suffices to show that
\begin{equation*}
\left\|\eta\left(t/T\right)u\right\|_{X_S^{s,b'}\left(\mathbb{R} \times \mathbb{T}\right)} \lesssim T^{b-b'} \left\|u\right\|_{X_S^{s,b}\left(\mathbb{R} \times \mathbb{T}\right)}.
\end{equation*}
From the following estimate,
\begin{equation*}
\left<\tau - \tau_0 - k^2\right>^b \lesssim \left<\tau_0\right>^{\left|b\right|} \left<\tau - k^2\right>^b,
\end{equation*}
we have
\begin{equation*}
\left\|e^{i t\tau_0} u\right\|_{X_S^{s,b}\left(\mathbb{R} \times \mathbb{T}\right)} \lesssim \left<\tau_0\right>^{\left|b\right|} \left\|u\right\|_{X_S^{s,b}\left(\mathbb{R} \times \mathbb{T}\right)}.
\end{equation*}
Since $\eta\left(t\right)$ is a Schwartz function, 
\begin{equation*}
\left\|\eta \left(t\right) u \right\|_{X_S^{s,b}\left(\mathbb{R} \times \mathbb{T}\right)} \lesssim \left(\int_{R} \left|\hat \eta \left(\tau_0\right)\right| \left<\tau_0\right>^{\left|b\right|} d \tau_0\right) \left\|u\right\|_{X_S^{s,b}\left(\mathbb{R} \times \mathbb{T}\right)} \lesssim \left\|u\right\|_{X_S^{s,b}\left(\mathbb{R} \times \mathbb{T}\right)}.
\end{equation*}
To simplify our argument, we first assume that $s=0$. Moreover, we may assume $b' =0$ by the duality with $b' = b$ case.
Thus, our claim is 
\begin{equation*}
\left\|\eta \left(t/T\right) u\right\|_{L_t^2 L_x^2 \left(\mathbb{R} \times \mathbb{T}\right)} \lesssim T^b \left\|u\right\|_{X_S^{0,b}\left(\mathbb{R} \times \mathbb{T}\right)}
\end{equation*}
for $0 < b < 1/2$. We now consider two cases separately, $\left<\tau - k^2\right> \geq 1/T$ and $\left<\tau - k^2\right> \leq 1/T$. 
In the former case, we have
\begin{equation*}
\left\|u\right\|_{X_S^{0,0}\left(\mathbb{R} \times \mathbb{T}\right)} \leq T^b \left\|u\right\|_{X_S^{0,b}\left(\mathbb{R} \times \mathbb{T}\right)}
\end{equation*}
by the fact that $\eta $ is a Schwartz function and $0< T< 1$. In the latter case, we have
\begin{equation*}
\begin{aligned}
\left\|\eta\left(t/T\right) u\right\|_{L_t^2 L_x^2} &\leq T^{1/2}\left\|\hat \eta\left(\tau\right) \right\|_{L_{\tau}^2} \left\|\mathcal{F}_t{u\left(t\right)}\left(k\right)\right\|_{L_t^{\infty} \ell_k^2} \\
&\lesssim T^{1/2}\left\|\hat \eta\left(\tau\right) \right\|_{L_{\tau}^2}\left\|\int_{\left<\tau - k^2\right> \leq 1/T} \left|\widetilde{u}\left(\tau, k\right) \right|d \tau \right\|_{\ell_k^2} \\
&\lesssim T^{1/2}\left\|\hat \eta\left(\tau\right) \right\|_{L_{\tau}^2} T^{b-1/2} \left\|\left(\int \left<\tau - k^2\right>^{2b} \left|\widetilde{u}\left(\tau, k\right)\right|^2 d \tau\right)^{1/2}\right\|_{\ell_k^2} \\
& = T^b \left\| u\right\|_{X_S^{s,b}\left(\mathbb{R} \times \mathbb{T}\right)}.
\end{aligned}
\end{equation*}
by the Plancherel and the triangle, the Cauchy-Schwarz, and the fact that $\eta $ is a Schwartz function.

Since the above argument does not depend $s$, we can get the same result in $s \in \mathbb{R}$.
\end{proof}

From Lemma \ref{lem: time local Xsb}, we have a modified embedding,
\begin{equation}\label{eq: new embedding}
\left\|\eta\left(\frac{t}{T}\right)u\right\|_{L^4_{t,x}} \lesssim \left\|\eta\left(\frac{t}{T}\right)u\right\|_{X_S^{0,\frac{3}{8}}} \lesssim T^{\gamma} \left\|u\right\|_{X_S^{0,3/8 + \gamma}}
\end{equation}
for sufficiently small $\gamma >0$. We now get Proposition \ref{prop:resonant bi est} by the similar argument in the proof of Proposition \ref{prop:bilinear est.} and \eqref{eq: new embedding} instead of \eqref{eq:Strichartz}.
\end{proof}

\begin{remark}
In fact, we can get the Proposition \ref{prop:resonant bi est} without the time growth. Hence from Proposition \ref{prop:resonant bi est} and summation with respect to each of dyadic frequency supports and the orthogonality of dyadic decomposition, we can obtain the full frequency estimate as follows,
\begin{equation*}\label{eq:full 1st bilinear est.}
\left\|uv\right\|_{Z^{0}_S} \lesssim  \left\|u\right\|_{Y^{0}_S}  \left\|v\right\|_{Y^{-1/2}_W},
\end{equation*}
and
\begin{equation}\label{eq:full 2nd bilinear est.}
\left\|\partial_x\left(  u \overline{v}\right)\right\|_{Z^{-1/2}_W} \lesssim  \left\|u\right\|_{Y^{0}_S}  \left\|v\right\|_{Y^{0}_S}.
\end{equation}
\end{remark}

\section{Global well-posedness}\label{sec:GWP}
In this section, we prove Theorem \ref{thm:GWP}, the global well-posedness for (\ref{eq:Zakharov}). It can be easily proved by combining the local well-posedness and the conservation law. More precisely, after splitting the time interval as a finite union of intervals (obviously, the length of each interval depends on \eqref{eq:mass conservation u}), we get the solution for each interval by the local well-posedness (see \cite{Takaoka:1999uw}) and glue each solution by using the mass conservation law of $u$. In particular, the nonlinear term of the wave part only consists of $u$, so the mass conservation of $u$ is sufficient to glue each solution\footnote{The details of this argument are in \cite{Colliander:2008cq}. In fact, Colliander et al. \cite{Colliander:2008cq} proved the global well-posedness of the Zakharov system on $\mathbb{R}$, but we can apply a similar argument to a torus.}.\\
\\
We briefly explain a sketch of the proof of Theorem \ref{thm:GWP}. It is sufficient to prove that
\begin{equation}\label{eq:GWP of wave part}
\sup_{\left|t\right|\le T}\left\|W\left(t\right)\left(u_0,n_0,n_1\right)\right\|_{H^{-1/2}_x} \lesssim C\left(T, \left\|u_0\right\|_{L^2_x}, \left\|n_0\right\|_{H_x^{-1/2}}, \left\|n_1\right\|_{H_x^{-3/2}}\right)
\end{equation}
and 
\begin{equation}\label{eq:GWP of Schrodinger part}
\sup_{\left|t\right|\le T}\left\|S\left(t\right)\left(u_0,n_0,n_1\right)\right\|_{ L^{{2}}_x} \lesssim C\left(T, \left\|u_0\right\|_{L^2_x}, \left\|n_0\right\|_{H_x^{-1/2}}, \left\|n_1\right\|_{H_x^{-3/2}}\right).
\end{equation}
for any $T>0$. To prove \eqref{eq:GWP of wave part} and \eqref{eq:GWP of Schrodinger part}, we define the norm as follows,
\begin{equation}\label{eq: new wave norm}
\left\|W\left(t\right) \left(u_0,n_0,n_1\right)\right\|_{\mathcal{W}} = \left\|(n,\partial_t n)\right\|_{\mathcal{W}} := \left(\left\|n\right\|_{H^{-1/2}}^2 + \left\|\partial_t n\right\|_{H^{-3/2}}^2\right)^{1/2}.
\end{equation}

Let $0<\alpha < 1$ be a constant, we can choose a sufficiently small time $T'$ such that 
\begin{align}
\left\|u\right\|_{Y^0_S\left[0,T'\right]} &\lesssim \left\|u_0\right\|_{L^2_x}, \label{eq:Bound of Y-norm}\\
\left\|u_0\right\|^2_{L^2_x} &\ll (T')^{-\alpha}\left\|(n_0,n_1)\right\|_{\mathcal{W}}, \label{eq:bound of L2-norm}
\end{align}
and 
\begin{equation}\label{eq: bound of wave data}
\left\|(n_0,n_1)\right\|_{\mathcal{W}} \lesssim (T')^{-1}
\end{equation}
by \eqref{eq:mass conservation u}. From \eqref{eq: new wave norm}, \eqref{eq:Y embedding}, \eqref{eq:Duhamel_wave}, the triangular inequality, Lemma \ref{lem:linear est.}, (\ref{eq:full 2nd bilinear est.}), and (\ref{eq:Bound of Y-norm}), we estimate
\begin{equation}\label{eq: bound of wave flow}
\begin{aligned}
\sup_{\left|t\right|\le T'}\left\|W\left(t\right)\left(u_0,n_0,n_1\right)\right\|_{\mathcal{W}} &\lesssim \left\|\Psi \left(\frac{t}{T'}\right)W\left(t\right)\left(u_0,n_0,n_1\right)\right\|_{Y^{-1/2}_W} \\
& \lesssim\left\|\Psi \left(\frac{t}{T'}\right)\partial_t V\left(t\right) n_0\right\|_{Y^{-1/2}_W} + \left\|\Psi \left(\frac{t}{T'}\right)V\left(t\right)n_1\right\|_{Y^{-1/2}_W} \\
&+ \left\|\Psi \left(\frac{t}{T'}\right)\beta^2 \int_0^t V\left(t-s\right) \left[\partial_x^2 \left(\left|u\right|^2\right)\right]\left(s\right)ds\right\|_{Y^{-1/2}_W} \\
&\lesssim \left\|n_0\right\|_{H^{-1/2}_x} + \left\|n_1\right\|_{H^{-3/2}_x} + \left\|\partial_x \left(\left|u\right|^2\right)\right\|_{Z^{-1/2}_W} \\
&\lesssim \left\|n_0\right\|_{H^{-1/2}_x} + \left\|n_1\right\|_{H^{-3/2}_x} + \left\|u\right\|^2_{Y_S^{0}} \\
&\lesssim \left\|(n_0,n_1)\right\|_{\mathcal{W}} + \left\|u_0\right\|^2_{L^2_x}.
\end{aligned}
\end{equation}
From (\ref{eq:bound of L2-norm}), we have
\begin{equation}\label{eq:LWP wave part}
\sup_{\left|t\right|\le T'}\left\|(W\left(t\right)\left(u_0,n_0,n_1\right)\right\|_{\mathcal{W}} \lesssim C\left(T', \left\|u_0\right\|_{L^2_x}, \left\|n_0\right\|_{H_x^{-1/2}}, \left\|n_1\right\|_{H_x^{-3/2}}\right)
\end{equation}
and by the similar calculation with \eqref{eq:Duhamel_Schrodinger},
\begin{equation}\label{eq:LWP schrodinger part}
\sup_{\left|t\right|\le T'}\left\|S\left(t\right)\left(u_0,n_0,n_1\right)\right\|_{ L^{{2}}_x} \lesssim C\left(T', \left\|u_0\right\|_{L^2_x}, \left\|n_0\right\|_{H_x^{-1/2}}, \left\|n_1\right\|_{H_x^{-3/2}}\right)
\end{equation}
for sufficiently small $T'$\footnote{In (\ref{eq:LWP schrodinger part}), the time $T'$ is the same as in (\ref{eq:LWP wave part}) because it is obtained by the local well-posedness, the fact that the nonlinear term of the Schr\"odinger part has $n\left(t,x\right)$, (\ref{eq:Bound of Y-norm}) and (\ref{eq:LWP wave part}).}. 

We now consider the gluing step. For any time $T$, we divide the total time interval $\left[-T,T\right]$ into time intervals such that each interval satisfies (\ref{eq:LWP wave part}) and (\ref{eq:LWP schrodinger part}). In the first such interval $\left[0, T'\right]$, we directly obtain (\ref{eq:GWP of wave part}) and (\ref{eq:GWP of Schrodinger part}), and in the next interval, we let $T'$ be the initial time and then obtain the claim by (\ref{eq:mass conservation u}). Hence, we can use the same iteration up to $\left\|W(\widetilde{T} )\left(u_0,n_0,n_1\right)\right\|_{\mathcal{W}} \gg \left\|u_0\right\|_{L^2_x}^2$. By taking this time as the initial time $\widetilde{T} =0$, we can repeat the entire procedure again. To reach the given time $T$, we need to show that time $\widetilde{T}$ is independent of $W\left(t\right)\left(u_0,n_0, n_1\right)$. From the final term in  \eqref{eq: bound of wave flow} and \eqref{eq:bound of L2-norm}, we can iterate $m$-times such that
\begin{equation*}
m \sim \frac{\left\|\left(n_0, n_1\right)\right\|_{\mathcal{W}}}{\left\|u_0\right\|^2_{L^2_x}},
\end{equation*}
and from \eqref{eq: bound of wave data}, we have
\begin{equation*}
\widetilde{T} = mT \lesssim \frac{1}{\left\|u_0\right\|^2_{L^2_x}}
\end{equation*}
which is independent of $W\left(t\right)\left(n_0,n_1\right)$. Therefore, we are done.

\section{Proof of Theorem \ref{thm: main thm}}
In this section, we prove Theorem \ref{thm: main thm}, the invariant of symplectic capacity with respect to the Zakharov flow. 
\subsection{Local approximation}
We introduce a new system as follows,
\begin{equation}\label{eq: modified truncated equation}
\left\{\begin{array}{ll}
i \partial_t \left(P_{\leq N}u\right) + \alpha \partial_x^2 \left(P_{\leq N}u\right) =P_{\le N}\left[\left(P_{\leq N}u\right)\left(P_{\leq N}n\right)\right], \\
\beta^{-2} \partial_t^2 \left(P_{\leq N}n\right) - \partial_x^2 \left(P_{\leq N}n\right) =P_{\le N} \left[\partial_x^2 \left(\left|P_{\leq N}u\right|^2\right)\right], \\
i \partial_t \left(\left(1-P_{\leq N}\right)u\right) + \alpha \partial_x^2 \left(\left(1-P_{\leq N}\right)u\right) =0,  \\
\beta^{-2} \partial_t^2 \left(\left(1-P_{\leq N}\right)n\right) - \partial_x^2 \left(\left(1-P_{\leq N}\right)n\right) =0
\end{array}\right.
\end{equation}
for the initial data $\left(u_0,n_0,n_1\right) \in \mathcal{H}$. Let ${Z}^N\left(t\right)$ be a solution flow with respect to \eqref{eq: modified truncated equation}, and its Hamiltonian is 
\begin{equation*}
{H}^N\left[u,n,\dot n\right] = \int_{\mathbb{T}} \left(\alpha \left|\partial_x u\right|^2 + \frac{\left| n\right|^2}{2} + \beta^2\frac{\left|i \partial_x^{-1} \dot  n\right|^2}{2} + P_{\leq N} n \left|P_{\leq N} u\right|^2 \right) dx.
\end{equation*}
Note that the new system \eqref{eq: modified truncated equation} has the same nonlinear operator in the low frequencies, and a linear operator in the high frequencies. Hence, the solution map ${Z}^N\left(t\right)$ is a smooth symplectomorphism with the symplectic form \eqref{eq:symplectic form} on the Hilbert  space $\mathcal{H}$, and the new system has the global well-posedness as well.

\begin{proposition}\label{prop:trunc of flow}
For a global-in-time $T > 0$ and any large integer $N$. The initial data $\left(u_0,n_0,n_1\right)$ is in $\mathcal{H}$. Assume that $Z\left(t\right)$ and ${Z}^N\left(t\right)$ be the Zakharov flow and the solution flow for \eqref{eq: modified truncated equation}, respectively. Then we have
\begin{equation*}\label{eq: claim of bound}
\sup_{\left|s\right|\leq t}\left\|\left(Z\left(s\right)\left(u_0,n_0,n_1\right)-{Z}^{N}\left(s\right)\left(u_0,n_0,n_1\right)\right)\right\|_{\mathcal{H}} \leq C\left(T, \left\|\left(u_0,n_0,n_1\right)\right\|_{\mathcal{H}}\right) N^{-\delta}
\end{equation*}
for a local-in-time $0<t \ll 1$ and $\delta >0 $.
\end{proposition}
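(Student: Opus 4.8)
The plan is to run a short-time absorption argument on the difference of the two Duhamel formulations \eqref{eq:Duhamel_Schrodinger}--\eqref{eq:Duhamel_wave}, using the bilinear machinery of Section~\ref{sec:estimates} to separate the \emph{feedback} of the difference (absorbed by the small time factor) from the \emph{truncation error} (which must carry the gain $N^{-\delta}$). Write $(u,n,\dot n)=Z(s)(u_0,n_0,n_1)$ and $(u^N,n^N,\dot n^N)=Z^N(s)(u_0,n_0,n_1)$, and set $\mathbf d=(u-u^N,\,n-n^N)$. By Theorem~\ref{thm:GWP} and the local estimates behind it, which are insensitive to the projections $P_{\le N}$, both flows obey on a short interval $[0,t]$ with $t\ll1$, uniformly in $N$,
\begin{equation*}
\|u\|_{Y^0_S}+\|u^N\|_{Y^0_S}\lesssim\|u_0\|_{L^2_x},\qquad
\|n\|_{Y^{-1/2}_W}+\|n^N\|_{Y^{-1/2}_W}\lesssim C\big(\|(u_0,n_0,n_1)\|_{\mathcal H}\big).
\end{equation*}
By the embedding \eqref{eq:Y embedding} it suffices to bound $\mathbf d$ in $\mathcal Y$; the $\dot n$-component in $H^{-3/2}_x$ is then controlled through the same bilinear inputs via the Duhamel formula for $\partial_t n$.

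I would first dispose of the high-frequency part of the difference. Since the high modes of the modified flow are linear, $(1-P_{\le N})u^N=U(s)(1-P_{\le N})u_0$, so $(1-P_{\le N})(u-u^N)$ is the Duhamel integral of $(1-P_{\le N})(un)$ alone; because its output frequency exceeds $N$, the interaction is nonresonant and \eqref{eq:1st bilinear est.} gives $\|(1-P_{\le N})(un)\|_{Z^0_S}\lesssim N^{-\delta}\|u\|_{Y^0_S}\|n\|_{Y^{-1/2}_W}$, and similarly for the wave part via \eqref{eq:2nd bilinear est.}. Hence $\|(1-P_{\le N})\mathbf d\|_{\mathcal Y}\lesssim C N^{-\delta}$ for free.

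For the low-frequency part I would split the source of $P_{\le N}(u-u^N)$ as
\begin{equation*}
\begin{aligned}
&P_{\le N}\big[un-(P_{\le N}u^N)(P_{\le N}n^N)\big]\\
&\quad= \underbrace{P_{\le N}\big[(P_{\le N}(u-u^N))(P_{\le N}n)+(P_{\le N}u^N)(P_{\le N}(n-n^N))\big]}_{\text{feedback}}
+ \underbrace{P_{\le N}\big[(P_{>N}u)\,n+(P_{\le N}u)(P_{>N}n)\big]}_{\text{error}},
\end{aligned}
\end{equation*}
with $P_{>N}:=1-P_{\le N}$, and perform the analogous splitting of the wave source $\partial_x^2(|u|^2)$. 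The feedback terms carry the factor $P_{\le N}\mathbf d$; estimating them by the full bilinear bound of the Remark following Proposition~\ref{prop:resonant bi est} (the wave part being \eqref{eq:full 2nd bilinear est.}) and restricting to $[0,t]$ produces a factor $t^{\gamma}$, so they are bounded by $Ct^{\gamma}\|\mathbf d\|_{\mathcal Y}$ and, for $t$ small, absorbed into the left-hand side. Among the error terms, those in which a high-frequency factor meets a low-frequency one with a separated output are nonresonant: e.g.\ for $(P_{>N}u)(P_{\le N}n)$ one has $|k_1|>N\ge|k_2|$, so the modulation bound \eqref{eq:1st resonant est.} forces $|k_0+k_1-\tfrac{\beta}{\alpha}S_1|=|2k_1+k_2-\tfrac{\beta}{\alpha}S_1|\gtrsim N_{max}$ and Proposition~\ref{prop:bilinear est.} again yields $N^{-\delta}$.

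The step I expect to be the genuine obstacle is the remaining, fully resonant error, in which two frequencies are comparable to $N$ and the resonance factor $|k_0+k_1-\tfrac{\beta}{\alpha}S_1|$ is only of size $O(1)$ — for instance $k_1\approx-\tfrac12 k_2$ with $|k_2|\sim N$, or a product $P_{>N}u\cdot P_{>N}n$ with low output. Here the hypothesis $\tfrac{\beta}{\alpha}\notin\mathbb Z$ still prevents an \emph{exact} resonance (it keeps $|k_0+k_1-\tfrac{\beta}{\alpha}S_1|\ge c_0>0$), but, as reflected in Proposition~\ref{prop:resonant bi est} carrying only a time gain and no frequency gain, these interactions enjoy no dispersive smoothing. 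I would therefore control them not by the bilinear gain but by the decay of the high-frequency tail of the solution: each such term is bounded by $t^{\gamma}$ times a product in which at least one factor is a high mode $P_{>N}u$ or $P_{>N}n$, whose $\mathcal Y$-norm is dominated by the corresponding tail of the data in $\mathcal H$. Converting this tail decay into the stated polynomial rate $N^{-\delta}$ — by exploiting the $H^{-1/2}_x$, $H^{-3/2}_x$ weights on the wave components and summing the dyadic pieces against the surviving $O(1)$ modulation — is the delicate part. Once all three groups are in place one obtains $\|\mathbf d\|_{\mathcal Y}\le Ct^{\gamma}\|\mathbf d\|_{\mathcal Y}+CN^{-\delta}$, and absorbing the first term gives $\sup_{|s|\le t}\|\mathbf d\|_{\mathcal H}\lesssim\|\mathbf d\|_{\mathcal Y}\lesssim C(T,\|(u_0,n_0,n_1)\|_{\mathcal H})N^{-\delta}$.
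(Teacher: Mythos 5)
Your overall strategy---differencing the two Duhamel formulas, extracting $N^{-\delta}$ from the nonresonant bilinear estimate (Proposition \ref{prop:bilinear est.}) and absorbing the feedback terms through the $T^{\gamma}$ gain of Proposition \ref{prop:resonant bi est}---is the same as the paper's, and your treatment of the high-frequency component of the difference and of the feedback terms matches the paper's estimates. The problem is the piece you yourself flag as ``the delicate part'': the resonant error terms such as $P_{\leq N}\bigl[\bigl(\left(1-P_{\leq N}\right)u\bigr)\,n\bigr]$ in the configuration $k_1\approx-\tfrac12 k_2$, $|k_0|\sim|k_1|\sim|k_2|\sim N$, where $k_0+k_1-\tfrac{\beta}{\alpha}S_1=O(1)$ and the only available gain is $t^{\gamma}$. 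The remedy you propose---trading the missing frequency gain for the smallness of the high-frequency tail of the solution---cannot deliver the stated rate: $\|\left(1-P_{\leq N}\right)u\|_{Y^0_S}$ is controlled by $\|\left(1-P_{\leq N}\right)u_0\|_{L^2_x}$ plus nonlinear contributions, and for data that is merely in $\mathcal{H}$ this tail tends to zero with \emph{no} rate that is uniform over a ball $\left\{\|(u_0,n_0,n_1)\|_{\mathcal H}\leq R\right\}$; the negative Sobolev weights on the wave components do not help, since they are already built into the $\mathcal H$- and $\mathcal Y$-norms. So the constant $C\left(T,\|(u_0,n_0,n_1)\|_{\mathcal H}\right)N^{-\delta}$ in the statement is not reached along this route, and the proposal as written does not close.

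For comparison, the paper takes a shorter path: it writes the Schr\"odinger source as $un-P_{\leq N}\left(u_{lo}n_{lo}\right)=\left(1-P_{\leq N}\right)\left(un\right)+P_{\leq N}\left(\left(u-u_{lo}\right)n\right)+P_{\leq N}\left(u_{lo}\left(n-n_{lo}\right)\right)$, assigns the frequency gain only to the first term (whose output frequency exceeds $N$, so $N_{max}\gtrsim N$ and \eqref{eq:1st bilinear est.} applies), and bounds the other two by $t^{\gamma}\mathcal R\,\|{\bf z}-{\bf z}^N\|_{\mathcal Y}$. That last step silently identifies $u-u_{lo}$ with a part of $u-u^N$; since $u-P_{\leq N}u^{N}=\left(u-u^{N}\right)+\left(1-P_{\leq N}\right)u^{N}$ and the leftover tail $\left(1-P_{\leq N}\right)u^{N}=U\left(t\right)\left(1-P_{\leq N}\right)u_0$ re-enters in exactly the resonant configuration you isolated, your more careful bookkeeping has exposed a weak point of the argument rather than introduced one. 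Repairing either version requires an additional input not supplied here---for instance first proving the approximation for data in a dense, more regular class (where the tail does decay polynomially) and arguing that this suffices for the capacity statement, or a refined resonant estimate exploiting ${\rm dist}\left(\tfrac{\beta}{\alpha},\mathbb Z\right)>0$ together with the constraint $|k_1|>N\geq|k_0|$.
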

\begin{proof}
We denote that ${\bf{z}}_0:=\left(u_0,n_0,n_1\right) $, ${\bf{z}}\left(t\right)= \left(u\left(t\right),n\left(t\right),\partial_t u\left(t\right)\right):=Z\left(t\right) {\bf{z}}_0 $ and ${\bf{z}}^N\left(t\right)= \left(u^N\left(t\right), n^N \left(t\right), \partial_t n^N\left(t\right)\right):= {Z}^N (t){\bf{z}}_0$. From the global well-posedness, there exists constant ${C}\left(T,\left\|{\bf{z}}_0\right\|_{\mathcal{H}}\right)$ such that
\begin{equation}\label{eq:small}
\left\|{\bf{z}}\left(t\right)\right\|_{\mathcal{Y}} + \left\|{\bf{z}}^N\left(t\right)\right\|_{\mathcal{Y}} \le {C}\left(T,\left\|{\bf{z}}_0\right\|_{\mathcal{H}}\right) := \mathcal{R}.
\end{equation}
We split the solution into two portions as follows,
\begin{equation*}
\begin{aligned}
{\bf{z}}\left(t\right) = {\bf{z}}_{lo} +{\bf{z}}_{hi} &:= P_{\leq N} {\bf{z}}\left(t\right) + \left(1-P_{\leq N}\right) {\bf{z}}\left(t\right)\\
&=P_{\leq N }u\left(t\right) + P_{\leq N }n\left(t\right)+ \left(1-P_{\leq N}\right) u\left(t\right)  + \left(1-P_{\leq N}\right) n\left(t\right)\\
&=:u_{lo}+n_{lo} +u_{hi}+n_{hi}.
\end{aligned}
\end{equation*}
By \eqref{eq:small}, we also have
\begin{equation*}\label{eq:split}
\left\|{\bf{z}}_{lo}\right\|_{\mathcal{Y}} \le \mathcal{R} ~ \text{and} ~\left\|{\bf{z}}_{hi}\right\|_{\mathcal{Y}} \le \mathcal{R}. 
\end{equation*}
Likewise, ${\bf{z}}^N$ is also split, and is bounded by $\mathcal{R}$ for each flow. Especially, $u^N_{hi}$ and $n^N_{hi}$ are linear flow by the definition of the new Hamiltonian system flow. By the structure of the wave part and \eqref{eq:Y embedding},
\begin{equation*}
\sup_{\left|s\right| \leq t} \left\|Z\left(t\right)\left(u_0,n_0,n_1\right)-{Z}^{N}\left(t\right)\left(u_0,n_0,n_1\right)\right\|_{\mathcal{H}} \lesssim \left\|Z\left(t\right)\left(u_0,n_0,n_1\right)-{Z}^{N}\left(t\right)\left(u_0,n_0,n_1\right)\right\|_{\mathcal{Y}}
\end{equation*}
The right hand side is bounded by
\begin{equation*}
\left\|\int_0^t U\left(t-s\right) \left[un - P_{\leq N}\left(u_{lo}n_{lo}\right)\right]\left(s\right) ds\right\|_{Y_S^0} + \left\|\int_0^t V\left(t-s\right) \partial_x^2\left[\left|u\right|^2 - P_{\leq N} \left|u_{lo}\right|^2\right]\left(s\right) ds\right\|_{Y_S^{-1/2}}
\end{equation*}
by the Duhamel's formula and the fact that the initial data is same. We first estimate the Schr\"odinger part. By Lemma \ref{lem:linear est.} and the Minkowski inequality, we have
\begin{equation}\label{eq: est of Sch part}
\begin{aligned}
&\left\|\int_0^t U\left(t-s\right) \left[un - P_{\leq N}\left(u_{lo}n_{lo}\right)\right] ds\right\|_{Y_S^0}  \lesssim \left\|un - P_{\leq N}\left(u_{lo}n_{lo}\right)\right\|_{Z^0_S}\\
&= \left\|un - P_{\leq N}\left(un\right)+P_{\leq N}\left(un\right)-P_{\leq N}\left(u_{lo}n\right)+P_{\leq N}\left(u_{lo}n\right)- P_{\leq N}\left(u_{lo} n_{lo}\right)\right\|_{Z_S^0} \\
&\leq \left\|\left(1- P_{\leq N}\right)\left(un\right)\right\|_{Z_S^0} +\left\|P_{\leq N}\left(\left(u-u_{lo}\right)n\right)\right\|_{Z_S^0}+\left\|P_{\leq N}\left(u_{lo}\left(n- n_{lo}\right)\right)\right\|_{Z_S^0}
\end{aligned}
\end{equation}
We apply \eqref{eq:1st bilinear est.} to the first term, and \eqref{eq:1st resonant bilinear est.} to the second term and the third term, thus the right hand side of \eqref{eq: est of Sch part} is bounded by
\begin{equation*}
\begin{aligned}
& N^{-\delta} \left\|u\right\|_{Y_S^0}\left\|n\right\|_{Y_W^{-1/2}} + t^{\gamma} \left\|n\right\|_{Y_W^{-1/2}}\left\|u-u_{lo}\right\|_{Y_S^0} +t^{\gamma} \left\|u_{lo}\right\|_{Y_S^{0}}\left\|n-n_{lo}\right\|_{Y_W^{-1/2}} \\	
=& N^{-\delta} \left\|u\right\|_{Y_S^0}\left\|n\right\|_{Y_W^{-1/2}} + t^{\gamma} \left(\left\|n\right\|_{Y_W^{-1/2}}\left\|u-u_{lo}\right\|_{Y_S^0} +\left\|u_{lo}\right\|_{Y_S^{0}}\left\|n-n_{lo}\right\|_{Y_W^{-1/2}}\right).
\end{aligned}
\end{equation*}
By the global well-posedness \eqref{eq:small}, we have the estimate for the Schr\"dinger part as follows,
\begin{equation*}
\left\|\int_0^t U\left(t-s\right) \left[un - P_{\leq N}\left(u_{lo}n_{lo}\right)\right] ds\right\|_{Y_S^0} \lesssim \mathcal{R}^2N^{-\delta}  + t^{\gamma}\mathcal{R}\left\| {\bf{z}}\left(t\right) - {\bf{z}}^N\left(t\right)\right\|_{\mathcal{Y}}.
\end{equation*}

By the similar calculation with \eqref{eq:2nd bilinear est.}, \eqref{eq:2nd resonant bilinear est.}, and the global well-posedness, the wave part is bounded as well. Indeed, 
\begin{equation*}
\begin{aligned}
\left\|\int_0^t V\left(t-s\right)\partial_x^2 \left[\left|u\right|^2 - P_{\leq N} \left|u_{lo}\right|^2\right]\left(s\right) ds\right\|_{Y_W^{-1/2}} &\lesssim N^{-\delta} \left\|u\right\|^2_{Y_S^0} + t^{\gamma} \left\|u\right\|_{Y_S^{0}}\left\|u-u_{lo}\right\|_{Y_S^0} \\
&\lesssim \mathcal{R}^2N^{-\delta}  + t^{\gamma}\mathcal{R}\left\| {\bf{z}}\left(t\right) - {\bf{z}}^N\left(t\right)\right\|_{\mathcal{Y}}.
\end{aligned}
\end{equation*}

Therefore, we have
\begin{equation*}
\begin{aligned}
&\left\|Z\left(t\right)\left(u_0,n_0,n_1\right)-{Z}^{N}\left(t\right)\left(u_0,n_0,n_1\right)\right\|_{\mathcal{Y}}  \\
& \lesssim \left\|\int_0^t U\left(t-s\right) \left[un - P_{\leq N}\left(u_{lo}n_{lo}\right)\right]\left(s\right) ds\right\|_{Y_S^0} + \left\|\int_0^t V\left(t-s\right)\partial_x^2 \left[\left|u\right|^2 - P_{\leq N} \left|u_{lo}\right|^2\right]\left(s\right) ds\right\|_{Y_W^{-1/2}}   \\
&\lesssim \mathcal{R}^2N^{-\delta}  + t^{\gamma}\mathcal{R}\left\|Z\left(t\right)\left(u_0,n_0,n_1\right)-{Z}^{N}\left(t\right)\left(u_0,n_0,n_1\right)\right\|_{\mathcal{Y}} .
\end{aligned}
\end{equation*}
Thus, choosing local-in-time $t$ such that $t<\left(\frac{1}{\mathcal{R}}\right)^{\frac{1}{\gamma}}$, we have
\begin{equation*}
\begin{aligned}
&\left\|Z\left(t\right)\left(u_0,n_0,n_1\right)-{Z}^{N}\left(t\right)\left(u_0,n_0,n_1\right)\right\|_{\mathcal{Y}} \\
 & \lesssim R N^{-\delta}  = C\left(T, \left\|\left(u_0,n_0,n_1\right)\right\|_{\mathcal{H}}\right)N^{-\delta}.
\end{aligned}
\end{equation*}
\end{proof}

\begin{remark}
The local-in-time $t$ in Proposition \ref{prop:trunc of flow} does not depend on frequency $N$. We thus conclude that the map $[Z-{Z}^N]\left(t\right)$ is regarded as  a small perturbation in a sufficiently short time interval.
\end{remark}

\subsection{Proof of symplectic invariant}

We separate the solution flow, and use an iteration argument. There exists a local time length $\tau\left(T, \left\|\left(u_0,n_0,n_1\right)\right\|_{\mathcal{H}}\right)>0$ such that the Zakharov flow satisfies Proposition \ref{prop:trunc of flow}. The global time interval $\left[0, T\right]$ is split to $\left[0=t_0, t_1\right] \cup \left[t_1, t_2\right] \cup \cdots \cup \left[t_{n-1}, t_n=T\right]$, and length of each interval is the constant $\tau\left(=\left|\tau_{i+1} - \tau_i\right|\right)$ that depends only on the implicit constant in Proposition \ref{prop:trunc of flow}. 
Let $\Omega_0$ be a initial domain which contains the initial data $u\left(x,0\right)$. Likewise, we denote that $\Omega_i\left(:= Z\left(t_i\right) \left(\Omega_0\right)\right)$ is a domain which has the solution $u\left(x,t_i\right)$. 
\\
{\noindent{\bf{FIRST STEP}}} (Local-time symplectic invariant)
\\
We first prove that  
\begin{equation}\label{eq: pre cap one side}
 \c \left(\Omega_1\right)= \c \left(Z\left(t_1\right) \left(\Omega_0 \right)\right)  \leq \c \left(\Omega_0\right).
\end{equation}
Let $f_1$ be a $m$-admissible function in $\Omega_1$ such that $m > \c\left( \Omega_0\right)$. From the Definition \ref{defn: Infn symplectic capacity}, it suffices to show that the function $f_1$ is a fast function in $\Omega_1$. Since the fact that the initial domain $\Omega_0$ is bounded and the Zakharov system has the global well-posedness, the domain $\Omega_1$ is a bounded domain as well. Denoting $\widetilde{\Omega}_0 = \Omega_0 \cap \left(Z\left(t_1\right)\right)^{-1} \left(\Omega_1\right)$, we have $\c\left(\widetilde{\Omega}_0\right) \leq \c \left(\Omega_0\right)$ by Definition \ref{defn: symplectic capacity}. Define $\varepsilon_1 = \d_{\Omega_1} \left(f_1\right)$, we can get a sufficiently large integer $N$ such that 
\begin{equation*}
\frac{\varepsilon_1}{2} > N_1^{- \delta}
\end{equation*}
where $\delta$ is the implicit constant in Proposition \ref{prop:trunc of flow}. The Zakharov flow is decomposed to 
\begin{equation*}
\begin{aligned}
Z\left(t_1\right) &= Z\left(t_1\right ) - Z^{N_1} \left(t_1\right) + Z^{N_1} \left(t_1\right)  \\
&= \left[I+ \left(Z\left(t_1\right) - {Z}^{N_1}\left(t_1\right)\right) \circ \left({Z}^{N_1}\left(t_1\right)\right)^{-1}\right] \circ {Z}^{N_1}\left(t_1\right) \\
&=: \left(I +Z_{\varepsilon_1}\left(t_1\right)\right)\circ {Z}^{N_1}\left(t_1\right),
\end{aligned}
\end{equation*}
where $I$ is an identity map from $\mathcal{H}$ to $\mathcal{H}$, and the map $Z^{N_1} \left(t_1\right)$ is the solution map for \eqref{eq:split}. Note that $I+ Z_{\varepsilon_1}\left(t_1\right)$ and $Z^{N_1}\left(t_1\right)$ are smooth symplectomorphisms. In the low frequencies, the solution map $Z^{N_1}\left(t_1\right)$ is composite operator with linear and nonlinear solution operators which is a finite dimensional symplectomorphism. In the high frequencies, the map are linear solution operator only, and they are isometries on the symplectic Hilbert space $\mathcal{H} \left(= L^2 \times H^{-1/2} \times H^{-3/2}\right)$. Hence, the classes of $m$-admissible functions are preserved by $Z^{N_1}\left(t_1\right)$. We thus show that 
\begin{equation}\label{eq: pre cap for approx op}
\c \left(\left(I + Z_{\varepsilon_1} \left(t_1\right)\right) (\acute{\Omega}_0 ) \right)  \leq \c \left(\acute{\Omega}_0 \right).
\end{equation}
where a domain $\acute{\Omega}_0 = Z^{N_1}\left(t_1\right)\left(\widetilde{\Omega}_0\right)$. 
By the decomposition of the Zakharov flow, we have 
\begin{equation*}
\left(I + Z_{\varepsilon_1} \left(t_1\right)\right) (\acute{\Omega}_0 )= \Omega_1.
\end{equation*}
Since an inverse operator $\left(Z^{N_1} \left(t_1\right)\right)^{-1}$ is also bounded, the operator $\left(Z\left(t_1\right) - Z^{N_1} \left(t_1\right)\right) \circ \left(Z^{N_1} \left(t_1\right)\right)^{-1}$ has an estimate
\begin{equation}\label{eq: approximation for trunc operator}
\left\|\left(Z\left(t_1\right) - Z^{N_1} \left(t_1\right)\right) \circ \left(Z^{N_1} \left(t_1\right)\right)^{-1} \right\|_{\acute\Omega_0 \to \Omega_1} \lesssim C\left(T,\acute \Omega_0 \right) N\left(\varepsilon_1\right)^{-\delta} =: N_1 \left(T, \acute \Omega_0, \varepsilon_1\right) ^{-\delta}
\end{equation}
for the constant $\delta >0$ by Proposition \ref{prop:trunc of flow}.\\
Let $V_{f_j^1}$ be a vector fields of the function $f_1$.  It suffices to show that the vector field $ V_{f_j^1}$ have a fast trajectory in the domain $\Omega_1$, for large integer $j$. The function $f_1$ is extended as $m$ outside $\Omega_1$, and provides an extended smooth function $g$ in $\mathcal{H}$. Moreover, let $h$ be a function which is restriction $g$ to $\acute \Omega_0$. Since the operator $Z_{{\varepsilon}_1}\left(t_1\right)$ has a estimate \eqref{eq: approximation for trunc operator}, the $\varepsilon_1$-neighborhood of $\partial {\Omega}_1$ is enclosed in the $\frac{ \varepsilon_1}{2}$-neighborhood of $\acute\Omega_0$, where $h \equiv m$. Furthermore, we have $h^{-1}\left(0\right)= f_1^{-1}\left(0\right) \subset \acute{\Omega}_0 \cap {\Omega}_1$ by \eqref{eq: boundary est. 1}. In other words, $\sp h$ is equal to $\sp f_1$. Hence, the function $h$ is an $m$-admissible function in $\acute {\Omega}_0$. Since $m>\c\left(\acute\Omega_0\right)$, the vector field $V_{h_j}$ has a fast trajectory in $\acute \Omega_0$ for all $j \gg 1$. By Lemma \ref{lem: support of fast traj.}, this trajectory lies in $\sp h$, which equals $\sp f$ by \eqref{eq: boundary est. 2}. Therefore, the vector field $V_{f_j}$ has a fast trajectory in ${\Omega}_1$ for all $j \gg 1$. That is, the function $f_1$ is fast in ${\Omega}_1$. \\
The opposite case can be shown by the same argument for the inverse operator. Therefore, we have
\begin{equation*}
\c \left(\Omega_1\right) = \c \left(Z\left(t_1\right) \Omega_0\right) = \c \left(\Omega_0\right)
\end{equation*}
for the local time $t_1$.
\\
{\noindent{\bf{SECOND STEP}}} (Iteration step)
\\
Fix a domain $\Omega_{i-1}$ for any time $t_{i}$, we can get an appropriate constant $\varepsilon_{i}$ which is depended on $\d_{\Omega_{i}}\left(f_{i}\right)$. Thus we have $N_{i}\left(T, \acute \Omega_{i-1}, \varepsilon_i\right)$, and so we show that the symplectic capacity is preserved for $\left[t_i, t_{i+1}\right]$ by the similar argument of the first step, since the constant $N_{i}\left(T, \acute \Omega_{i-1}, \varepsilon_{i}\right)$ is independent of the local time length $\tau$. Repeating the process, we have that the Zakharov flow preserves the symplectic capacity is its the phase space for the given global-time $T$.

\bibliography{Capacity_Za}
\bigskip
\end{document}